\theoremstyle{plain}
\newtheorem{teo}{Theorem}[section]
\newtheorem{prop}[teo]{Proposition}
\newtheorem{prob}{Problem}
\theoremstyle{definition}
\newtheorem{defin}[teo]{Definition}
\newtheorem*{conj*}{Conjecture}
\newtheorem*{rem}{Remark}
\newcommand{\N}{\mathbb{N}}            
\newcommand{\R}{\mathbb{R}}             
\newcommand{\G}{\mathcal{G}}            
\DeclareMathOperator{\diag}{diag}
\title{New results and open problems on subgraph centrality \protect}
\author{Nikita Deniskin\footnote{Scuola Normale Superiore, Pisa, Italy. E--mail : {\tt nikita.deniskin@sns.it, michele.benzi@sns.it}.} \hspace{0.5cm} Michele Benzi\footnotemark[1]\protect}
\date{12 November 2021}
\begin{document}
	\maketitle

	\begin{abstract}
	\noindent Subgraph centrality, introduced by Estrada and Rodr\'iguez-Vel\'azquez in \cite{ERVsubgraphdefinition},
	has become a widely used centrality measure in the analysis of networks, with applications in biology,
	neuroscience, economics and many other fields. It is also worthy of study from a strictly mathematical 
	point of view, in view of its connections to topics in spectral graph theory, number theory, analytic matrix
	functions, and combinatorics. In this paper
	we present some new results and a list of open questions about subgraph centrality and other node centrality measures based on graph walks. 
	\end{abstract}
	\textit{AMS 2020 subject classifications:} 05C50, 15A16.
	
	\noindent \textit{Keywords:} Cospectral Vertices, Graph Walks, Subgraph Centrality, Interlacing.

	\section{Introduction}
	
	The ranking of nodes in a network has long been one of the most fundamental problems in Network Science. 
	Determining the most influential individuals in a social network (whether human or animal), discovering the essential proteins
	inside the cell, finding the most relevant web pages providing information on a given search topic or locating the best connected 
	hubs in a transportation network are clearly tasks of the utmost importance in many situations. In simple terms, a node centrality measure 
	is a function defined on the vertex set of  a graph, with values in $\R_{\geq 0}$; hence, a centrality measure assigns a {\em score} (a nonnegative
	real number) to each node, and a node is deemed ``important" if it receives a (relatively) high score. 
	
	Dozens of node  centrality measures have been proposed in the literature, often rooted in different notions of ``importance" and thus based on
	different mathematical principles, but most of them relying only on the underlying graph's internal structure. For 
	excellent introductions to the most widely used centrality measures, including references and examples, the reader is referred
	to the books by Estrada \cite{EstradaBOOK} and by Newman \cite{NewmanBOOK}.
	In some cases, one or more parameters are added to the purely graph-theoretic structure, possibly as a way to model some
	external influence on the network.  For example, one may want to take into account the existence of tensions between members of a social network \cite{EBH12}, or of heightened risk conditions for an economic or financial network  \cite{BBCGE}.
	
	A basic mathematical question is whether a given centrality measure is able to discriminate between (i.e., assign different scores to) nodes
	that are ``structurally different". Some centrality measures have been shown to have a higher ``discriminating power" than others, in the sense that the set of graphs for which they are able to assign different centrality values to at least two nodes in the graph
	contains strictly the corresponding set of graphs for the other measures \cite{EstradaArxiv,RP}. 
	In this paper we focus on one such measure, known as {\em subgraph centrality}, first proposed in  \cite{ERVsubgraphdefinition} (see
	also \cite{EHstatisticalmechanics}) and defined as follows: if $\G$ is a graph with $n$ vertices with adjacency matrix $A$, the
	subgraph centrality of the i$th$ node is given by
	\[ SC(i,\beta) = [e^{\beta A}]_{ii}\,,\]
	where $\beta > 0$ is a parameter. That is, the subgraph centrality of node $i$ is the $i$th diagonal entry of the matrix exponential 
	$e^{\beta A}$. 
	The interpretation of this centrality measure in terms of closed walks on the graph will be discussed in the next section. 
	It has been shown in  \cite{ERVsubgraphdefinition} that there exist regular graphs (i.e., graphs in which all nodes have the same degree) 
	which contain nodes with different subgraph centrality while the eigenvector centrality \cite{Bona}, Katz centrality \cite{Katz}, total communicability \cite{BKtotcomm}
	and many other measures of centrality are necessarily the same for all nodes. Besides its mathematical interest, the fact that some centrality 
	measures are able to provide a finer node ranking than others is clearly important also from the point of view of applications, since the whole
	point of these measures is to determine which nodes play a more central role within the network.
	
	Given two vertices $i,j$, we would like to compare their subgraph centralities $SC(i,\beta)$ and $SC(j,\beta)$. 
	When is one bigger than the other? When are they equal? Is it possible that  vertex $i$ is more important than $j$
	for some values of $\beta$, 
	but the reverse holds for some other values of $\beta$? If yes, how many times can two nodes reverse their relative position in the
	ranking as $\beta$ increases from 0 to infinity? We note that the latter question is not purely curiosity-driven, but it arises
	naturally in the context of risk-based centrality measures in economic and financial networks, see \cite{BBCGE}. In this paper we will consider
	these and other questions, answer some of them (in full or in part), and see which remain open.
	
	The remainder of the paper is organized as follows. In sections \ref{sec2.1:prel} and \ref{sec2.2:estrada}, after introducing the necessary notation and background
	information, we review recent results on subgraph centrality, walk regularity, and vertex cospectrality. In section \ref{sec2.3:interlace}
	we discuss the centrality interlacing problem, and in section \ref{sec2.4:resolvent} we consider another (related) centrality measure based on walks,
	resolvent centrality.  Finally, in section \ref{sec3:open} we provide a list of open questions.

	\section{Results on subgraph centrality}
	
	\subsection{Preliminaries}\label{sec2.1:prel}
	
	Let $\G$ be a simple undirected graph with $n$ vertices (we will use the words {\em node} and {\em vertex} interchangeably) and adjacency matrix $A$. Later we will generalize some of the results to the weighted and directed cases. The adjacency matrix is symmetric and thus diagonalizable: $A=QDQ^{-1}=QDQ^T$, with $D=\diag(\lambda_1, \ldots, \lambda_n)$ and
	$Q$ orthogonal. Hence, the $(i,i)$ entry of $A$ can be written as:
	\begin{equation*}
	A_{ii} = \sum\limits_{k=1}^n Q_{ik} \,\lambda_k \,[Q^{-1}]_{ki} =\sum\limits_{k=1}^n \lambda_k  (Q_{ik})^2.
	\end{equation*}
	Denote by $\mu_1, \ldots, \mu_d$ the distinct eigenvalues of $A$, without repetition. Then by grouping together equal eigenvalues, we obtain:
	\begin{equation*}
	A_{ii} = \sum\limits_{h=1}^d \mu_h \, C_{h i} \,, 
	\end{equation*}
	where $C_{h i} = \sum\limits_{k} Q_{ik} \,[Q^{-1}]_{ki} = \sum\limits_{k}  (Q_{ik})^2$, with the sum running over all indices $k$ such that $\lambda_k = \mu_h$.  This expression is useful, because it yields formulas for the individual entries of matrix functions, defined as
	\begin{equation*}
	f(A) = Q\, f(D) \,Q^{-1},
	\end{equation*}
	see \cite{Higham}.
	For example, the entry in position $(i,i)$ can be expressed as
	\begin{equation*}
	[f(A)]_{ii} =\sum\limits_{k=1}^n Q_{ik}\, f(\lambda_k) \,[Q^{-1}]_{ki} = \sum\limits_{h=1}^d f(\mu_h)\,  C_{h i}\,.
	\end{equation*}
	
	\begin{rem} 
		It is easy to verify that if $E_h$ denotes the 
		orthogonal projector onto the eigenspace of $A$ associated with $\mu_h$, then $C_{h i} = [E_h]_{ii}$; see, e.g., \cite{Meyer}.
	\end{rem}
	
	\vspace{0.3cm}
	
	The subgraph centrality of a vertex was introduced by Estrada and Rodr\'iguez-Vel\'azquez in \cite{ERVsubgraphdefinition} for $\beta=1$, and later generalized by Estrada and Hatano \cite{EHstatisticalmechanics} introducing the parameter $\beta$, which we assume always to be positive. 
	As mentioned in the Introduction, the subgraph centrality is defined as follows:
	\begin{equation*}
	SC(i,\beta) = [e^{\beta A}]_{ii}.
	\end{equation*}
	The subgraph centrality can be expressed in two ways. One way is to use the spectral decomposition of $A$, as described above; this is a compact and explicit expression, which we will use in some of our proofs.  Another way, which gives more insight into the combinatorial interpretation of this centrality measure, consists of using the Taylor series expansion at 0:
	\[[e^{\beta A}]_{ii} = 1 + \frac{\beta}{1!}  [A]_{ii}  + \frac{\beta^2}{2!} [A^2]_{ii}  + \frac{\beta^3}{3!} [A^3]_{ii} + \cdots .\]
	Recalling that for any positive integer $r$ the $(i,j)$ entry of $A^r$ equals the number of walks of length $r$  in $\G$ starting at node $i$ and ending at node $j$, we see
	that the subgraph centrality of node $i$ is a weighted sum of the number of closed walks (of any length) starting and ending in $i$, leading to the
	following  interpretation: vertices with a higher $SC$ score are visited by many closed walks (particularly shorter ones), and so are better connected to 
	other, not too distant  nodes in the graph. The role of the coefficients $\beta^r/r!$ in the Taylor expansion of $SC$ is that of penalizing longer walks. The
	parameter $\beta$ can be used to give more or less weight to longer or shorter walks. 
	
	\vspace{0.3cm}
	For our purposes, it is convenient to introduce the following terminology.
	\begin{defin} Two vertices $i,j$ of $\G$ are \textit{$\beta$-subgraph equivalent} if $[e^{\beta A}]_{ii} = [e^{\beta A}]_{jj}$.
	\end{defin}
	
	If there exists an automorphism of $\G$ that sends $i$ to $j$, then $i$ and $j$ are indistinguishable in the graph, so any centrality measure should give the same 
	score to both of them. In particular, it is obvious that two such $i$ and $j$ are $\beta$-subgraph equivalent. However, there are many examples of $\beta$-subgraph equivalent vertices not related by any automorphism.
	
	\vspace{0.3cm}
	
	We recall the definition of cospectral vertices from \cite{GScospectral}. By Theorem 3.1 of the same paper, there are many equivalent characterizations, of which we will use the following one.
	
	\begin{defin}Two vertices $i,j$ of $\G$ are \textit{cospectral} if for every integer $r \geq 0$, $[A^r]_{ii} = [A^r]_{jj}$.
	\end{defin}
	
	By Taylor series expansion, two cospectral vertices are $\beta$-subgraph equivalent. It can be easily shown that two cospectral vertices have also the same degree, eigenvector and resolvent centralities (defined below), while the same does not generally hold for other centrality measures, see \cite{RP,Stevanovic}.
	
	We recall also the definition of a walk regular graph, from \cite[p. 190]{GRbook} or \cite[Thm. 4.1]{GMKwalkregular}. 
	
	\begin{defin} $\G$ is \textit{walk regular} if for every integer $r \geq 0$, the diagonal entries of $A^r$ are constant; i.e.  $[A^r]_{ii} = [A^r]_{jj}$ for every $i,j$.
	\end{defin}
	
	Observe that a graph is walk regular if and only if every pair of vertices is cospectral.
	
	Another related quantity is the walk entropy of a graph \cite{EPHwalkentropy,EHstatisticalmechanics}, with parameter $\beta$.
	More precisely, the walk entropy of $G$ is defined as the Shannon entropy of the vector of subgraph centralities $\left( SC(1,\beta),  \ldots, SC(n,\beta)  \right)$, normalized to be a probability distribution, that is, 
	\begin{equation*}
	S(G,\beta) = \sum_{i=1}^n p_i \log p_i, \qquad p_i = \frac{[e^{\beta A}]_{ii}}{\text {Tr} [e^{\beta A}]}\,.
	\end{equation*}

	It can be easily seen that given $\beta > 0$ and $n$, the walk entropy is maximal for graphs whose $n$ vertices are all $\beta$-subgraph equivalent, and conversely
	(this follows from basic properties of Shannon entropy; see e.g. \cite{CTbook}).
	For the sake of simplicity, we reformulate the conjectures in section \ref{sec2.2:estrada} and Theorem \ref{teo: no interlacements with acc point} in our terminology, although in the original articles they were stated in terms of walk entropy or related concepts.

	\subsection{Walk regularity and proof of Estrada's Conjecture}\label{sec2.2:estrada}
	
	A natural question is whether two $\beta$-subgraph equivalent vertices are necessarily cospectral, and under which conditions. Estrada, and subsequently Kloster, Kr\'al and Sullivan, posed the following conjectures, reformulated with our terminology:

	\begin{conj*}[Estrada, \cite{EstradaArxiv}, Conjecture 3; see also \cite{EPHwalkentropy}]
		A graph $\G$ is walk regular if and only if all vertices are $1$-subgraph equivalent.
	\end{conj*}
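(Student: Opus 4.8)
The ``only if'' direction is immediate and I would dispatch it first: if $\G$ is walk regular then $[A^r]_{ii}=[A^r]_{jj}$ for all $r\ge 0$ and all $i,j$, and summing the Taylor series $[e^{A}]_{ii}=\sum_{r\ge 0}[A^r]_{ii}/r!$ shows at once that all vertices are $1$-subgraph equivalent. The real content is the converse, and my plan is to reduce it to a linear independence statement about the numbers $e^{\mu_1},\dots,e^{\mu_d}$, where $\mu_1,\dots,\mu_d$ are the distinct eigenvalues of $A$.

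Using the spectral decomposition recalled in Section~\ref{sec2.1:prel}, I would write $[e^{A}]_{ii}=\sum_{h=1}^d e^{\mu_h}C_{hi}$ with $C_{hi}=[E_h]_{ii}$ and $E_h$ the orthogonal projector onto the $\mu_h$-eigenspace. The hypothesis that all vertices are $1$-subgraph equivalent then says that for every pair $i,j$,
\[
\sum_{h=1}^d e^{\mu_h}\,(C_{hi}-C_{hj})=0 .
\]
The next step is to record two facts. First, since $A$ has integer entries, $\mu_1,\dots,\mu_d$ are distinct algebraic integers. Second, each $E_h$ is a polynomial in $A$ with coefficients in $\Q(\mu_1,\dots,\mu_d)$ (for instance via $E_h=\prod_{k\ne h}(A-\mu_k I)/(\mu_h-\mu_k)$), so each $C_{hi}$, and hence each difference $C_{hi}-C_{hj}$, is an algebraic number.

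Now I would invoke the Lindemann--Weierstrass theorem in the form: if $\alpha_1,\dots,\alpha_m$ are distinct algebraic numbers, then $e^{\alpha_1},\dots,e^{\alpha_m}$ are linearly independent over $\Qalg$. Applied to the distinct algebraic numbers $\mu_1,\dots,\mu_d$, it forces $C_{hi}-C_{hj}=0$ for every $h$ in the displayed relation; that is, $i$ and $j$ are cospectral. Since this holds for every pair of vertices, $\G$ is walk regular by the equivalence ``walk regular $\iff$ all pairs of vertices cospectral'' noted above (equivalently, one reads off $[A^r]_{ii}=\sum_h\mu_h^rC_{hi}=[A^r]_{jj}$ for all $r$).

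The only non-elementary ingredient here is the Lindemann--Weierstrass theorem; everything else is routine linear algebra. The single point that requires a moment of care is the claim that the $C_{hi}$ are algebraic numbers: this is where one must avoid thinking of them as generic real entries coming from an orthonormal eigenbasis and instead use the spectral projector formula, which makes algebraicity transparent. Organized this way the argument should be short, and it goes through verbatim with $\beta=1$ replaced by any nonzero algebraic $\beta$.
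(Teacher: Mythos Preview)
Your proof is correct and follows essentially the same route as the paper (which attributes it to \cite{BDvertexdistinction}): expand $[e^{A}]_{ii}$ via the spectral decomposition, then apply the Lindemann--Weierstrass theorem to the resulting linear relation $\sum_h e^{\mu_h}(C_{hi}-C_{hj})=0$ to force $C_{hi}=C_{hj}$ and hence cospectrality. Your explicit justification that the $C_{hi}$ are algebraic via the projector formula $E_h=\prod_{k\ne h}(A-\mu_k I)/(\mu_h-\mu_k)$ is a point the paper's sketch leaves implicit, and is exactly the right way to fill that gap.
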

	
	\begin{conj*}[Kloster, Kr\'al, Sullivan, \cite{KKS}, Conjecture 5]
		A graph $\G$ is walk regular if and only if there exists a rational $\beta > 0$ such that all vertices are $\beta$-subgraph equivalent. 
	\end{conj*}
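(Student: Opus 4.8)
The plan is to establish both conjectures simultaneously, since Estrada's statement is the case $\beta = 1$ of the Kloster--Kr\'al--Sullivan statement. The ``only if'' direction is immediate: if $\G$ is walk regular then $[A^r]_{ii}$ is independent of $i$ for every integer $r\ge 0$, so the Taylor expansion $[e^{\beta A}]_{ii}=\sum_{r\ge 0}\frac{\beta^r}{r!}[A^r]_{ii}$ shows that all vertices are $\beta$-subgraph equivalent, for every $\beta>0$ and in particular for $\beta=1$ and for every rational $\beta>0$.

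For the ``if'' direction the main tool will be the Lindemann--Weierstrass theorem in its linear form: if $\alpha_1,\dots,\alpha_d$ are pairwise distinct algebraic numbers, then $e^{\alpha_1},\dots,e^{\alpha_d}$ are linearly independent over $\Qalg$. The strategy is to combine this with the spectral formula from Section~\ref{sec2.1:prel}, namely $[e^{\beta A}]_{ii}=\sum_{h=1}^d e^{\beta\mu_h}C_{hi}$ with $C_{hi}=[E_h]_{ii}$, and to show that the only way the diagonal entries of $e^{\beta A}$ can all coincide is for the spectral weights $C_{hi}$ to be independent of $i$.

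Concretely, I would proceed in four steps. First, the distinct eigenvalues $\mu_1,\dots,\mu_d$ of $A$ are algebraic integers, being roots of the monic integer polynomial $\det(xI-A)$; since we assume $\beta\in\Q$ with $\beta\neq 0$, the numbers $\beta\mu_1,\dots,\beta\mu_d$ are pairwise distinct algebraic numbers. Second, writing $E_h=\prod_{k\neq h}(A-\mu_k I)/(\mu_h-\mu_k)$ exhibits each spectral projector as a polynomial in $A$ with algebraic coefficients, so every $C_{hi}$, and hence every difference $C_{hi}-C_{hj}$, lies in $\Qalg$. Third, the hypothesis that all vertices are $\beta$-subgraph equivalent reads, for each pair $i,j$, as the relation $\sum_{h=1}^d e^{\beta\mu_h}\,(C_{hi}-C_{hj})=0$; by the first two steps this is a $\Qalg$-linear dependence among $e^{\beta\mu_1},\dots,e^{\beta\mu_d}$, so Lindemann--Weierstrass forces $C_{hi}=C_{hj}$ for all $h$ and all $i,j$. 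Finally, $[A^r]_{ii}=\sum_{h=1}^d\mu_h^r\,C_{hi}$ is then independent of $i$ for every $r\ge 0$, which is exactly the statement that $\G$ is walk regular.

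The crucial point, and the place where the hypothesis is used in an essential way, is the rationality of $\beta$ (in fact algebraicity would suffice): this is precisely what places the exponents $\beta\mu_h$ within the scope of the Lindemann--Weierstrass theorem. If $\beta$ is allowed to be transcendental the argument breaks down, since one can then exhibit genuine $\Qalg$-linear relations among the $e^{\beta\mu_h}$; the corresponding question for a single transcendental value of $\beta$ is of a different nature and is among the open problems in Section~\ref{sec3:open}. The only other point requiring care is the verification that the $C_{hi}$ are algebraic, but this is routine given the polynomial expression for the spectral projectors.
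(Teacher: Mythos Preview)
Your proposal is correct and follows essentially the same route as the paper: write $[e^{\beta A}]_{ii}=\sum_{h=1}^d e^{\beta\mu_h}C_{hi}$ via the spectral decomposition, observe that the $\beta\mu_h$ are distinct algebraic numbers and the $C_{hi}$ are algebraic, and apply the Lindemann--Weierstrass theorem to force $C_{hi}=C_{hj}$, whence cospectrality and walk regularity. You supply slightly more detail than the paper's sketch (in particular, you spell out why the $C_{hi}$ are algebraic via the polynomial expression for the spectral projectors), but the argument is the same; one small inaccuracy is your closing remark that the transcendental-$\beta$ question appears among the open problems of Section~\ref{sec3:open} --- in fact the paper simply notes that counterexamples for transcendental $\beta$ are already known, and the listed open problems concern other matters.
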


	\noindent These conjectures were recently proven true in \cite{BDvertexdistinction}, with the following result.
	
	\begin{teo}[Ballini, Deniskin, \cite{BDvertexdistinction}, Theorem 3]
		Let $\G$ be a simple undirected graph with adjacency matrix $A$ and let $\beta\neq 0$ be an algebraic number. 
		If two vertices $i,j$ are $\beta$-subgraph equivalent, then they are cospectral.
		
		\label{teo: BD lindemann weierstrass}
	\end{teo}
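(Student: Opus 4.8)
The plan is to combine the spectral representation of subgraph centrality recalled in Section~\ref{sec2.1:prel} with a classical transcendence result, namely the Lindemann--Weierstrass theorem in its linear-independence form. Writing $\mu_1,\dots,\mu_d$ for the distinct eigenvalues of $A$ and $C_{hi}=[E_h]_{ii}$ as in the preliminaries, the hypothesis that $i$ and $j$ are $\beta$-subgraph equivalent reads
\[
\sum_{h=1}^d e^{\beta\mu_h}\bigl(C_{hi}-C_{hj}\bigr)=0 .
\]
The goal is to deduce that $C_{hi}=C_{hj}$ for every $h$, since then $[A^r]_{ii}=\sum_{h}\mu_h^{\,r}C_{hi}=\sum_{h}\mu_h^{\,r}C_{hj}=[A^r]_{jj}$ for all integers $r\ge 0$, which is exactly cospectrality of $i$ and $j$.

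First I would record the arithmetic nature of the data appearing in the displayed identity. The $\mu_h$ are roots of the characteristic polynomial of the integer matrix $A$, hence algebraic integers; since $\beta$ is a nonzero algebraic number, the exponents $\beta\mu_1,\dots,\beta\mu_d$ are algebraic numbers, and they are pairwise distinct precisely because $\beta\neq 0$. Moreover, each spectral projector can be written as $E_h=\prod_{g\neq h}(A-\mu_g I)/(\mu_h-\mu_g)$, a polynomial in $A$ with coefficients in the number field $\Q(\mu_1,\dots,\mu_d)$; consequently the entries $C_{hi}=[E_h]_{ii}$, and in particular the differences $C_{hi}-C_{hj}$, are (real) algebraic numbers.

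The heart of the argument is then the application of the Lindemann--Weierstrass theorem: if $\alpha_1,\dots,\alpha_d$ are distinct algebraic numbers, then $e^{\alpha_1},\dots,e^{\alpha_d}$ are linearly independent over $\Qalg$. Taking $\alpha_h=\beta\mu_h$, the displayed relation exhibits a $\Qalg$-linear combination of $e^{\beta\mu_1},\dots,e^{\beta\mu_d}$ that vanishes, so all of its coefficients must vanish: $C_{hi}=C_{hj}$ for every $h$, and the conclusion follows as explained above. The only real obstacle is transcendence-theoretic: one must check that the hypotheses of Lindemann--Weierstrass genuinely hold, i.e.\ that the exponents $\beta\mu_h$ are algebraic --- this is exactly where the assumption that $\beta$ is algebraic is indispensable, since for transcendental $\beta$ such a linear relation among exponentials can occur without the coefficients vanishing --- and that the coefficients $C_{hi}-C_{hj}$ lie in $\Qalg$; everything else is routine manipulation of the spectral decomposition.
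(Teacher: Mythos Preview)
Your proposal is correct and follows essentially the same route as the paper's own argument: express $SC(i,\beta)-SC(j,\beta)$ via the spectral decomposition as $\sum_{h}(C_{hi}-C_{hj})e^{\beta\mu_h}$, invoke the Lindemann--Weierstrass theorem to force all coefficients to vanish, and then deduce cospectrality from $[A^r]_{ii}=\sum_h \mu_h^{\,r}C_{hi}$. You in fact supply more detail than the paper's sketch, explicitly verifying that the exponents $\beta\mu_h$ are distinct algebraic numbers and that the coefficients $C_{hi}-C_{hj}$ lie in $\Qalg$, which is precisely what is needed to apply the transcendence result.
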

	
	The idea of the proof is the following. By writing explicitly the $(i,i)$ entry of  $e^{\beta A} = Q \,e^{\beta D} \,Q^{-1}$, we obtain:
	\begin{equation*}
	SC(i,\beta) = [e^{\beta A}]_{ii} = \sum\limits_{h=1}^d  e^{\beta \mu_h} \, C_{ h i}\,.
	\end{equation*}
	If $SC(i,\beta) = SC(j,\beta)$, then by the  Lindemann-Weiestrass Theorem (see, e.g., \cite{Baker}) 
	the coefficients must be equal: $C_{h i} = C_{h j}$ for every $h$. This implies that $i$ and $j$ are cospectral, by writing explicitly $[A^r]_{ii} = \sum\limits_h \mu_h^r\, C_{h i } $. 
	
	As stated in \cite{BDvertexdistinction}, the result also holds under weaker assumptions. For example, it is applicable
	to directed graphs with diagonalizable adjacency matrix $A$, and 
	to weighted undirected graphs with algebraic weights.  
	
	\vspace{0.3cm}
	Before this result, it was known (see \cite{HKS,KKS}) that there exist graphs for which there are
	values of $\beta$ for which all vertices are $\beta$-subgraph equivalent, but that are not walk regular. 
	As a consequence of the result in \cite{BDvertexdistinction}, we know that such values of $\beta$ are
	necessarily transcendental.
	
	\vspace{0.3cm}
	
	There are, however, more open questions waiting to be answered. 
	The following result is a slight modification of Theorem 2.2 in  \cite{Benzi}. With our terminology, the condition of walk regularity is equivalent to all vertices being cospectral, and the maximum walk entropy property (see \cite{Benzi}) is equivalent to all vertices being $\beta$-subgraph equivalent. By comparing only two vertices and not all of them together, we can reformulate the theorem as follows:

	\begin{teo}
		Given two vertices $i,j$ of $\G$, let $I$ be the set of all $\beta>0$ such that $i$ and $j$ are $\beta$-subgraph equivalent. 
		If there is a $\beta_0\in I$ which is an accumulation point for a sequence in $I$, then $i$ and $j$ are cospectral and $I$ is necessarily all of $\R^+$.
		\label{teo: no interlacements with acc point}
	\end{teo}
	\begin{proof}
		Consider the function $g(\beta) = [e^{\beta A}]_{ii} - [e^{\beta A}]_{jj}$, which is an analytic function of $\beta$. The zeros of a non-constant analytic function (if there are any) form a discrete set (possibly of infinite cardinality), so the hypothesis that $\beta_0$ is an accumulation point implies that $g$ is identically zero on all $\R^+$. This means that the coefficients of the Taylor series expansion are all zero. The $r$-th coefficient is $\frac{1}{r!} \left([A^r]_{ii} - [A^r]_{jj} \right)$, so if it is equal to zero for all $r$, then $i$ and $j$ must be cospectral. For more details, see the proof of Theorem 2.2 in \cite{Benzi}.
	\end{proof}
	
	It is natural to ask if this result can be improved. For example, is it still true if $I$ is finite? Is there any condition on $M=|I|$, the cardinality of $I$, under which the result holds?
	
	It has been shown (see, e.g., \cite{KKS}) that the result does not hold  for $M=1$, and  it is common to find pairs of non-cospectral vertices which are $\beta$-subgraph equivalent at least for one $\beta$. In \cite{BKcentralitymeasures}, it was shown that if $deg(i) > deg(j)$, where $deg(\cdot)$ is the degree of a vertex, then for all $\beta$ small enough $SC(i,\beta) > SC(j,\beta)$. Similarly, with $EC(\cdot )$ being the eigenvector centrality,\footnote{We recall that for a connected graph $\G$, the eigenvector centrality of vertex $i$ is defined as the $i$th entry of the unique strictly positive, normalized
		eigenvector associated with the largest eigenvalue of $A$.}
	if $EC(i) > EC(j)$ then $SC(i,\beta) > SC(j,\beta)$ for all $\beta$ large enough.
	Thus if for two vertices $i,j$ we have $deg(i)> deg(j)$ and $EC(i) < EC(j)$, then by continuity there exists (at least) one value of $\beta$ for which $SC(i,\beta) = SC(j,\beta)$.

	\begin{defin}
		Let $i,j$ be two non-cospectral vertices of a graph $\G$. We say that $i$ and $j$ \textit{interlace at $\beta$} if $SC(i,\beta) = SC(j,\beta)$; $\beta$ is an 
		\textit{interlacing value}.
	\end{defin}
	
	\begin{rem} \textnormal{By virtue of Theorem \ref{teo: BD lindemann weierstrass}, it follows that given two vertices $i,j$, any interlacing value $\beta$ cannot be an algebraic number, so it must necessarily be transcendental.}
	\end{rem}
	
	\begin{rem}\textnormal{ Let $I$ be the set of all interlacing values of two vertices $i,j$. Then $I$ is the zero set of the analytic function (restricted to $\beta>0$):
			\[ g(\beta) = [e^{\beta A}]_{ii} - [e^{\beta A}]_{jj}.\]
			Theorem \ref{teo: no interlacements with acc point} implies that $I$ must be either a discrete set (possibly infinite)} or all of $\R^+$.
	\end{rem}
	
	The fact that the relative ranking of two nodes $j$ and $j$ can be different for different values of $\beta$ is of
	interest in applications to Network Science. For instance, in the study of financial markets 
	the parameter $\beta$ can be a measure of the risk affecting a given market, and as it varies some assets will swap
	places in a ranking based on subgraph centrality or other parameter-dependent measures, see \cite{BBCGE}. One
	measure of the sensitivity of the ranking is how many times the relative ranking of two assets can change as $\beta$ varies.
	It is therefore of interest to provide an upper bound on the number of interlacing values. This question will be addressed next.

	\subsection{Bounding the number of interlacing values}\label{sec2.3:interlace}
	
	We have seen that if nodes $i$ and $j$ are not cospectral, then the set $I$ cannot have an accumulation point. 
	The following result from \cite{BBCGE} states that $I$ cannot be infinite, under a mild condition:
	
	\begin{teo}[Bartesaghi \textit{et al.},  \cite{BBCGE}, Theorem 7.2]
		Let the undirected graph $\G$ be connected, and let 
		$i$ and $j$ be two nodes with different eigenvector centrality. Then the number of interlacing values for $i$ and $j$ is necessarily finite (possibly zero).
		\label{teo: no infinite interlac}
	\end{teo}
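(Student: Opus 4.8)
The plan is to combine the spectral representation of the function
\[ g(\beta) = [e^{\beta A}]_{ii} - [e^{\beta A}]_{jj} \]
with Perron--Frobenius theory to pin down its asymptotic behavior, and with analyticity to control it on bounded intervals.

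First I would expand $g$ as in the discussion following Theorem \ref{teo: BD lindemann weierstrass}:
\[ g(\beta) = \sum_{h=1}^d e^{\beta\mu_h}\,(C_{hi}-C_{hj}), \]
where $\mu_1 > \mu_2 > \cdots > \mu_d$ are the distinct eigenvalues of $A$ and $C_{hi}=[E_h]_{ii}$. Since $\G$ is connected, the Perron--Frobenius theorem implies that $\mu_1$ is a simple eigenvalue with a strictly positive normalized eigenvector $v$; hence $E_1 = vv^{\T}$ and $C_{1i}=v_i^2 = EC(i)^2$. Thus the hypothesis $EC(i)\neq EC(j)$ is precisely the statement that the leading coefficient $C_{1i}-C_{1j}$ is nonzero.

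Next I would study the limit $\beta\to+\infty$. Writing
\[ g(\beta) = e^{\beta\mu_1}\Bigl[(C_{1i}-C_{1j}) + \sum_{h=2}^d e^{\beta(\mu_h-\mu_1)}(C_{hi}-C_{hj})\Bigr] \]
and using $\mu_h-\mu_1<0$ for all $h\geq 2$, the bracketed term tends to $C_{1i}-C_{1j}\neq 0$; hence there is a threshold $B>0$ such that $g(\beta)\neq 0$ for every $\beta>B$, i.e.\ $I\cap(B,\infty)=\varnothing$. On the other hand $g$ extends to an entire function of $\beta$ which, by the previous step, is unbounded on $\R^+$ and therefore non-constant; so by the identity theorem its zeros have no finite accumulation point (this is exactly the mechanism behind Theorem \ref{teo: no interlacements with acc point}). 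Consequently $I\cap(0,B]$ is a bounded set with no accumulation point, hence finite by Bolzano--Weierstrass, and combining the two ranges shows that $I$ itself is finite.

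The argument is short once the spectral picture is in place; the one point where both hypotheses (connectedness and $EC(i)\neq EC(j)$) are genuinely used — and the only real obstacle — is guaranteeing that the dominant exponential $e^{\beta\mu_1}$ cannot be cancelled by the lower-order terms, which is what prevents interlacing values from escaping to $+\infty$ and reduces the problem to the compact interval $[0,B]$, where analyticity closes the argument.
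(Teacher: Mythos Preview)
Your argument is correct. The paper itself does not supply a proof of this theorem --- it is quoted from \cite{BBCGE} without proof --- so there is no in-paper proof to compare against directly. Your approach (Perron--Frobenius to ensure the leading coefficient $C_{1i}-C_{1j}$ is nonzero, then an asymptotic estimate to clear out $\beta>B$, then analyticity plus Bolzano--Weierstrass on the remaining compact piece) is the natural qualitative route and is presumably close in spirit to the original argument in \cite{BBCGE}.

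What the present paper does instead is bypass this statement entirely and prove the strictly stronger Theorem~\ref{teo: SC max d interlacements}: at most $d-1$ interlacing values, with \emph{no} hypothesis on the eigenvector centralities. That proof goes through Proposition~\ref{prop: max d solutions to sum exp = 0}, an inductive Rolle-type bound on the number of real zeros of an exponential sum $\sum c_h e^{\beta\mu_h}$. So the trade-off is: your asymptotic-plus-analyticity argument uses connectedness and $EC(i)\neq EC(j)$ in an essential way (to secure the dominant term) and yields only finiteness; the paper's zero-counting argument needs neither hypothesis and yields an explicit bound, at the cost of the auxiliary Proposition~\ref{prop: max d solutions to sum exp = 0}.
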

	
	We will now show that $I$ is always finite (i.e., the condition on the eigenvector centrality can be dropped), and that it is possible to find an upper bound for the cardinality of $I$. Indeed, we will show that it is always less than the number of vertices $n$, even for certain types of weighted and directed graphs; for simple undirected graphs, sharper estimates can be given. These are hiterto unpublished results, extracted from the first author's  thesis \cite{Deniskin}.
	We will first need the following technical result.
	
	\begin{prop}
		Let $\lambda_1, \ldots, \lambda_n$ be distinct real numbers, and $c_1, \ldots , c_n $ (not necessarily distinct) real numbers. Consider the following function:
		\begin{equation*}
		f(\beta)=c_1 e^{\beta \lambda_1} + c_2 e^{\beta \lambda_2} + \cdots + c_n e^{\beta \lambda_n} .
		\end{equation*}
		
		Assume that $f(\beta)$ is not constant. Then the equation  $f(\beta)=0$ has at most $n-1$ solutions, and the equation $f(\beta)=a$ with $a \neq 0$ has at most $n$ solutions.
		
		\label{prop: max d solutions to sum exp = 0}
	\end{prop}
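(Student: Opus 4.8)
The plan is to reduce both assertions to a single statement about the number of real zeros of an exponential sum, and then prove that statement by induction on the number of terms, using Rolle's theorem.

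First I would note that the second assertion follows from the first. If $f(\beta)=a$ with $a\neq 0$, rewrite the equation as $\tilde f(\beta):=f(\beta)-a\,e^{0\cdot\beta}=0$. If $0$ is not among $\lambda_1,\dots,\lambda_n$, then $\tilde f$ is an exponential sum with $n+1$ pairwise distinct exponents; if $0=\lambda_k$ for some $k$, then $\tilde f$ still has $n$ terms (only the coefficient $c_k$ changes, to $c_k-a$). In either case $\tilde f$ is not identically zero, since $\tilde f\equiv 0$ would force $f\equiv a$, contradicting that $f$ is non-constant. So it suffices to prove the reduced claim: an exponential sum $\sum_{i=1}^{m}c_i e^{\lambda_i\beta}$ with pairwise distinct real exponents that is not identically zero has at most $m-1$ real zeros. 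Applied with $m=n$ this is exactly the first assertion (here "non-constant" is used only to guarantee $f\not\equiv 0$, a nonzero constant having no zeros anyway), and applied to $\tilde f$, which has $m\in\{n,n+1\}$ terms, it yields at most $n$ zeros, which is the second assertion.

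I would then prove the reduced claim by strong induction on $m$. The case $m=1$ is immediate, since $c_1e^{\lambda_1\beta}$ vanishes nowhere unless $c_1=0$. For the inductive step, let $g(\beta)=\sum_{i=1}^{m}c_i e^{\lambda_i\beta}$ be not identically zero, with $\lambda_1<\dots<\lambda_m$. If some $c_i=0$, delete that term and conclude by the inductive hypothesis; so assume all $c_i\neq0$. Multiplying by the everywhere-positive factor $e^{-\lambda_1\beta}$ does not change the zero set, so $h(\beta):=e^{-\lambda_1\beta}g(\beta)=c_1+\sum_{i=2}^{m}c_i e^{(\lambda_i-\lambda_1)\beta}$ has the same zeros as $g$. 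Differentiating removes the constant term:
\[
h'(\beta)=\sum_{i=2}^{m}c_i(\lambda_i-\lambda_1)\,e^{(\lambda_i-\lambda_1)\beta},
\]
an exponential sum with $m-1$ terms, pairwise distinct nonzero exponents $\lambda_i-\lambda_1$, and nonzero coefficients $c_i(\lambda_i-\lambda_1)$; it is not identically zero because the functions $e^{\nu\beta}$ with distinct $\nu$ are linearly independent. By the inductive hypothesis $h'$ has at most $m-2$ real zeros, so by Rolle's theorem (between any two consecutive zeros of $h$ lies a zero of $h'$) the function $h$, and hence $g$, has at most $m-1$ real zeros. This closes the induction.

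The only substantive point is the inductive step, and the step to be careful about is verifying that after multiplying by $e^{-\lambda_1\beta}$ and differentiating one genuinely obtains an exponential sum with strictly fewer terms that is still not identically zero — this is precisely where the distinctness of the exponents and the reduction to $m-1$ nonzero coefficients are used. Everything else (the base case, the reduction of the $f=a$ case, and the bookkeeping when some $c_i$ vanish) is routine; I do not expect any serious obstacle beyond setting up the induction cleanly. The result is a classical weak form of the generalized Descartes rule of signs for exponential sums.
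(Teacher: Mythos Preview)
Your proof is correct and rests on the same ingredients as the paper's --- induction on the number of terms, factoring out one exponential, and Rolle's theorem --- only organized a bit more cleanly: you decouple the two assertions by absorbing $a$ as an extra $e^{0\cdot\beta}$ term and then prove a single ``$m$ terms $\Rightarrow$ at most $m-1$ zeros'' lemma, whereas the paper intertwines them (reducing $f_{n+1}=0$ to $g_n=-1$ and then deriving the $f_{n+1}=a$ bound from Rolle applied to $f_{n+1}'$). The mathematical content is essentially identical.
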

	\begin{proof}
		The function $f(\beta)$ is constant only if all $c_i$ are equal to zero, or if there is only one $j$ such that $c_j \neq 0$ and $\lambda_j=0$.
		
		We proceed by induction on $n$. For $n=1$ we have $f(\beta)=c_1 e^{\beta \lambda_1}$, which is an injective function and never equal to zero if  $c_1 \neq 0$ 
		and $\lambda_1 \neq 0$.
		
		We now assume that the claim holds for $n$ and for every function $f_n(\beta)$ with $n$ terms, and we will prove it for $f_{n+1}(\beta)=c_1 e^{\beta \lambda_1} +  \cdots +c_n e^{\beta \lambda_n} + c_{n+1} e^{\beta \lambda_{n+1}}$.
		
		Without loss of generality assume that $c_{n+1}\neq 0$. Then:
		\begin{equation*}
		f_{n+1}(\beta)=0 \hspace{3mm} \iff \hspace{3mm} \frac{f_{n+1}(\beta)}{c_{n+1} e ^{\beta \lambda_{n+1}}} =0,
		\end{equation*} 
		\begin{equation*}
		\begin{aligned}
		\frac{f_{n+1}(\beta)}{c_{n+1} e ^{\beta \lambda_{n+1}}} 
		=& \frac{c_1}{c_{n+1}} e^{\beta(\lambda_1-\lambda_{n+1})} + \cdots + \frac{c_n}{c_{n+1}} e^{\beta(\lambda_n-\lambda_{n+1})} +1=\\
		=& d_1 e^{\beta \mu_1} + \cdots + d_n e^{\beta \mu_n} + 1=g_{n}(\beta)+1,
		\end{aligned}
		\end{equation*}
		with $d_i=\dfrac{c_i}{c_{n+1}}$ and $\mu_i=\lambda_i-\lambda_{n+1}$.  We have $ \frac{f_{n+1}(\beta)}{c_{n+1} e ^{\beta \lambda_{n+1}}} =0$ if and only if $g_n(\beta)=-1$; however, $g_n(\beta)$ is the sum of $n$ exponential terms, so by inductive hypothesis there are at most $n$ solutions to $g_n(\beta)=-1$, which are also all the solutions to $f_{n+1}(\beta)=0$.
		
		For $a \neq 0$, suppose that the equation $f_{n+1}(\beta)=a$ has $k$ solutions $\beta_1 < \beta_2 <\ldots < \beta_k$. By Rolle's Theorem, there exists $k-1$ numbers $\gamma_1, \ldots, \gamma_{k-1}$ such that $\beta_i < \gamma_i < \beta_{i+1}$ and $f_{n+1}'(\gamma_i)=0$ for $1 \leq i \leq n-1$. Now,
		\begin{equation*}
		f_{n+1}'(\gamma) = c_1\lambda_1 e^{\gamma \lambda_1} + \cdots + c_n \lambda_n e^{\gamma \lambda_n} + c_{n+1} \lambda_{n+1} e^{\gamma \lambda_{n+1}} .
		\end{equation*} 	
		If there exists at least one $i$ such that $c_i \lambda_i\neq 0$, then by  what we have just proven the equation $f_{n+1}'(\gamma)=0$ has at most $n-1$ solutions. This means $k-1 \leq n-1$ and so $k \leq n$.
		
		If $c_i \lambda_i=0$ for every $i$, then the function $f_{n+1}'(x)$ is identically zero, so $f_{n+1}(x)$ is constant, which contradicts the initial hypothesis. 
	\end{proof}
	
	Using this result we can derive the following bound on the number of interlacing values. 
	
	\begin{teo}\label{inter_points}
		Let $\G$ be a (possibly weighted) undirected graph with adjacency matrix $A$, and denote by $d$ the number of distinct eigenvalues of  $A$.  
		For any two non-cospectral vertices $i,j$ there can be at most $d-1$ interlacing values for the subgraph centrality.	
		\label{teo: SC max d interlacements}
	\end{teo}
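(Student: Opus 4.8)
The plan is to reduce the statement to Proposition \ref{prop: max d solutions to sum exp = 0} by expressing the difference of the two subgraph centralities as a pure exponential sum with at most $d$ terms. Using the spectral decomposition $A = \sum_{h=1}^d \mu_h E_h$ (valid here since $A$ is symmetric even in the weighted case) and the identity $[f(A)]_{ii} = \sum_{h} f(\mu_h)\,C_{hi}$ with $C_{hi} = [E_h]_{ii}$, I would set
\[
g(\beta) := SC(i,\beta) - SC(j,\beta) = \sum_{h=1}^d \bigl(C_{hi} - C_{hj}\bigr)\, e^{\beta \mu_h},
\]
so that the interlacing values of $i$ and $j$ are exactly the positive zeros of $g$. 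The exponents $\mu_1,\dots,\mu_d$ are distinct by definition, so $g$ has precisely the form to which the Proposition applies.

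Next I would verify the two hypotheses needed to invoke the Proposition, namely that $g$ is neither the zero function nor a nonzero constant. For the former, recall that $[A^r]_{ii} = \sum_h \mu_h^r\,C_{hi}$; if every coefficient $C_{hi} - C_{hj}$ were zero, then $[A^r]_{ii} = [A^r]_{jj}$ for all $r$, so $i$ and $j$ would be cospectral, contrary to assumption. (Conversely, cospectrality forces all these coefficients to vanish, via a Vandermonde argument on $r = 0,\dots,d-1$ with distinct $\mu_h$; hence non-cospectrality of $i,j$ is exactly equivalent to the coefficient vector being nonzero.) For the latter, observe that $g(0) = [e^{0\cdot A}]_{ii} - [e^{0\cdot A}]_{jj} = 1 - 1 = 0$, so $g$ cannot be identically equal to a nonzero constant. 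Therefore $g$ is a non-constant exponential sum.

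Finally, after discarding the terms with zero coefficient, $g$ is a non-constant sum of $n' \le d$ exponentials with distinct exponents, and Proposition \ref{prop: max d solutions to sum exp = 0} gives that the equation $g(\beta) = 0$ has at most $n' - 1 \le d - 1$ solutions in $\R$, hence at most $d-1$ positive ones. This is exactly the asserted bound on the number of interlacing values.

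The deduction is immediate once the Proposition is in hand; the only point requiring care is the handling of the two degenerate cases — the coefficient vector being zero, and $g$ being a nonzero constant — which I expect to be the sole (minor) obstacle and which is dispatched respectively by the cospectrality characterization above and by evaluating $g$ at $\beta = 0$.
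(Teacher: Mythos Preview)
Your proof is correct and follows exactly the same route as the paper: express $g(\beta)$ via the spectral decomposition as an exponential sum with at most $d$ terms and invoke Proposition~\ref{prop: max d solutions to sum exp = 0}. If anything, you are more careful than the paper in explicitly ruling out the two degenerate possibilities ($g$ identically zero, handled via cospectrality, and $g$ a nonzero constant, handled via $g(0)=0$), which the paper's proof leaves largely implicit.
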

	\begin{proof}
		Write 
		\[g(\beta) = [e^{\beta A }]_{ii} - [e^{\beta A}]_{jj} = \sum\limits_{h=1}^d e^{\beta \mu_h}\,  \left( C_{h i} - C_{h j} \right) \,.\]
		By applying Proposition \ref{prop: max d solutions to sum exp = 0}, the equation $g(\beta) = 0$ has at most $d-1$ solutions, if $g$ is not identically constant. This means that there are at most $d-1$ interlacing values of $\beta$, unless the vertices are cospectral.	
	\end{proof}
	
	\begin{rem} \textnormal{
			We note that the result also holds  for the case of  a digraph with real
			spectrum and diagonalizable adjacency matrix.}
	\end{rem}
	
	Using the theory of totally positive matrices, we can prove a slightly stronger result.
	
	\begin{defin}
		A matrix $B$ of size $m\times n$ is called \textit{totally positive} if every square submatrix of $B$ has positive determinant.
	\end{defin}

	Below are two useful results on totally positive matrices, whose proofs can be found in  \cite{PinkusBOOK}.
	\begin{teo}[Pinkus,  \cite{PinkusBOOK}, Theorem 3.3]
		Let $B$ be a totally positive $m\times n$ matrix. Let $x\in \R^n$ be a column vector, and $y = Bx$. 
		Then the number of sign changes  in $y$ is less than  or equal to the number of sign changes in $x$. 
		\label{teo: TP decreases sign changes}
	\end{teo}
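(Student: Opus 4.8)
The plan is to deduce this variation-diminishing property from the Cauchy--Binet formula together with a single identity expressing the vanishing of a square determinant. For a real vector $v$, write $S^{-}(v)$ for the number of sign changes in the sequence obtained from $v$ by deleting its zero entries; the assertion is $S^{-}(Bx)\le S^{-}(x)$. First I would normalise: deleting each zero entry of $x$ together with the corresponding column of $B$ changes neither $Bx$ nor $S^{-}(x)$, and a submatrix of a totally positive matrix is again totally positive, so I may assume $x$ has no zero entry. Put $p=S^{-}(x)$ and let $J_{1}<J_{2}<\cdots<J_{p+1}$ be the maximal blocks of consecutive indices on which $x$ has constant sign $\sigma_{t}\in\{+1,-1\}$; adjacent blocks carry opposite signs, so $\sigma_{1},\dots,\sigma_{p+1}$ alternate. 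Let $q=S^{-}(Bx)$ and suppose, for contradiction, that $q\ge p+1$ (note this forces $B$ to have at least $p+2$ rows).

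By definition of $q$ there are rows $i_{1}<\cdots<i_{p+2}$ of $B$ on which $Bx$ is \emph{strictly} alternating, i.e. $(-1)^{\ell}(Bx)_{i_{\ell}}$ has one fixed sign for $\ell=1,\dots,p+2$. Let $\hat B$ be the totally positive $(p+2)\times n$ submatrix of $B$ on these rows and put $\hat y=\hat B x$, so $\hat y$ is strictly alternating. Introduce the $n\times(p+1)$ matrix $P$ with $P_{kt}=|x_{k}|$ if $k\in J_{t}$ and $P_{kt}=0$ otherwise; then $x=P\sigma$ with $\sigma=(\sigma_{1},\dots,\sigma_{p+1})^{\T}$, hence $\hat y=\hat C\sigma$ where $\hat C=\hat B P$ is $(p+2)\times(p+1)$. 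I then claim every $(p+1)\times(p+1)$ minor of $\hat C$ is strictly positive. By Cauchy--Binet such a minor, on rows $j_{1}<\cdots<j_{p+1}$, equals $\sum_{k_{1}<\cdots<k_{p+1}}\det\hat B[\{j_{s}\},\{k_{l}\}]\cdot\det P[\{k_{l}\},\{1,\dots,p+1\}]$; because the $J_{t}$ are consecutive and the $k_{l}$ are increasing, the identity is the only permutation that can contribute to $\det P[\{k_{l}\},\cdot]$, so this minor of $P$ vanishes unless $k_{l}\in J_{l}$ for every $l$, in which case $P[\{k_{l}\},\cdot]$ is diagonal and the minor equals $\prod_{l}|x_{k_{l}}|>0$. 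Each surviving term therefore carries $\det\hat B[\{j_{s}\},\{k_{l}\}]>0$ by total positivity, and at least one term survives (choose any $k_{l}\in J_{l}$), proving the claim.

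To finish, form the $(p+2)\times(p+2)$ matrix $M=[\,\hat C\mid\hat y\,]$. Since $M(\sigma^{\T},-1)^{\T}=\hat C\sigma-\hat y=0$ with $(\sigma^{\T},-1)^{\T}\ne 0$, $M$ is singular. Expanding $\det M$ along its last column gives $0=\sum_{\ell=1}^{p+2}(-1)^{\ell+p+2}\,\hat y_{\ell}\,\det\hat C[\widehat{\ell},\{1,\dots,p+1\}]$. Each minor $\det\hat C[\widehat{\ell},\cdot]$ is positive by the previous paragraph, while $(-1)^{\ell}\hat y_{\ell}$ has a fixed sign because $\hat y$ is strictly alternating, so every summand is nonzero and all summands share the same sign; the sum cannot vanish, a contradiction. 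Hence $q\le p$, which is the assertion.

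The step I expect to need the most care is the Cauchy--Binet computation in the second paragraph, specifically the assertion that the block matrix $P$ contributes only its ``diagonal'' terms — that a strictly increasing choice of rows of $P$ yields a nonzero minor exactly when each row index lies in the block of the same order — together with the bookkeeping that accompanies the reductions: one must check that removing the zero entries of $x$, forming the sign blocks $J_{t}$, and the alternation of $\sigma$ are all consistent with the chosen value of $p$, and that the strictly alternating rows chosen for $\hat B$ remain strictly alternating after truncation to $p+2$ of them. Everything after that is the Laplace expansion of a vanishing determinant, which is the real engine of the proof; one could instead route the same idea through compound matrices, but the elementary determinant identity seems cleanest.
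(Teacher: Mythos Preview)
The paper does not actually prove this theorem: it is quoted from Pinkus's monograph, with the explicit remark that the proof ``can be found in \cite{PinkusBOOK}.'' So there is nothing in the paper to compare your argument against beyond the bare citation.

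That said, your proof is correct and is essentially the classical Schoenberg--Karlin argument that Pinkus presents. The reduction to $x$ with no zero entries is harmless; the block matrix $P$ does indeed have the property that a $(p+1)\times(p+1)$ minor on increasing rows $k_1<\cdots<k_{p+1}$ is nonzero precisely when $k_l\in J_l$ for each $l$ (since the map $l\mapsto t(l)$ with $k_l\in J_{t(l)}$ is nondecreasing, it is a bijection only if it is the identity), and then it equals $\prod_l|x_{k_l}|>0$. Cauchy--Binet therefore gives strict positivity of every maximal minor of $\hat C$, and the Laplace expansion of $\det M=0$ along the last column yields a sum of nonzero terms of a common sign, the desired contradiction. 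The bookkeeping you flag as delicate is all in order: from $S^-(y)\ge p+1$ one can extract $p+2$ strictly alternating entries, and truncating a longer strictly alternating subsequence to its first $p+2$ terms preserves strict alternation.
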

	
	For our purposes, zeros are simply ignored when counting the number of sign changes in a vector.

	\begin{teo}[Pinkus,  \cite{PinkusBOOK}, Section 4.2]
		Let $\beta_1 < \ldots < \beta_M$ and $\mu_1 < \ldots < \mu_d$ be real numbers. The $M\times d$ matrix $B = \left [e^{\beta_l \mu_h}\right ]$ is totally positive.
		\label{teo: matrix exp is TP}
	\end{teo}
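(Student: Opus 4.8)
The plan is to reduce the total positivity of $B=[e^{\beta_l\mu_h}]$ to the positivity of generalized Vandermonde determinants, and then to establish the latter by combining Proposition~\ref{prop: max d solutions to sum exp = 0} (to exclude vanishing) with a continuity/connectedness argument (to pin down the sign).

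First I would observe that an arbitrary $k\times k$ submatrix of $B$ is obtained by selecting rows with indices $\beta_{l_1}<\cdots<\beta_{l_k}$ and columns with indices $\mu_{h_1}<\cdots<\mu_{h_k}$, so it equals $[e^{\beta_{l_s}\mu_{h_t}}]_{s,t=1}^k$. Setting $x_s:=e^{\beta_{l_s}}$ (hence $0<x_1<\cdots<x_k$) and $a_t:=\mu_{h_t}$ (hence $a_1<\cdots<a_k$), this submatrix becomes the generalized Vandermonde matrix $V=[x_s^{a_t}]_{s,t=1}^k$. Thus it suffices to prove that $\det V>0$ whenever $0<x_1<\cdots<x_k$ and $a_1<\cdots<a_k$ are real numbers; for $k=1$ this is simply $e^{\beta_l\mu_h}>0$.

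Next I would show that $\det V\neq 0$. If $\det V=0$, there is a nonzero vector $(c_1,\dots,c_k)$ with $\sum_{t=1}^k c_t x_s^{a_t}=0$ for every $s$. Writing $x=e^y$, the function $f(y)=\sum_{t=1}^k c_t e^{a_t y}$ then has the $k$ distinct real zeros $y_s=\log x_s$. But $f$ is nonconstant: it is constant only when all $c_t$ vanish, or when exactly one $c_t\neq 0$ with the corresponding $a_t=0$, in which case $f$ has no zeros at all. By Proposition~\ref{prop: max d solutions to sum exp = 0}, a nonconstant $f$ of this form has at most $k-1$ zeros, a contradiction. Finally I would fix the sign by continuity: the map $(x_1,\dots,x_k,a_1,\dots,a_k)\mapsto\det[x_s^{a_t}]$ is continuous on the set $\Omega=\{0<x_1<\cdots<x_k\}\times\{a_1<\cdots<a_k\}$, which is a product of convex sets and hence connected; by the previous step it never vanishes on $\Omega$, so it has constant sign there. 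Evaluating at the point with $a_t=t-1$ gives the ordinary Vandermonde determinant $\prod_{s<s'}(x_{s'}-x_s)>0$, so the sign is positive on all of $\Omega$. Hence every square submatrix of $B$ has positive determinant, i.e. $B$ is totally positive.

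I expect no serious obstacle once Proposition~\ref{prop: max d solutions to sum exp = 0} is available; the only points that need a little care are the degenerate ``$f$ constant'' case in the vanishing step, and checking that $\Omega$ is genuinely connected so that the sign-constancy argument applies.
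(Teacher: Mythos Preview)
The paper does not actually prove this theorem; it is quoted from Pinkus's monograph with the remark that the proof can be found there. Your argument is correct and, pleasantly, makes the result self-contained within the paper by deriving it from Proposition~\ref{prop: max d solutions to sum exp = 0}: the nonvanishing of the generalized Vandermonde determinant is exactly the statement that an exponential sum with $k$ terms cannot have $k$ distinct real zeros, and the connectedness/continuity step then fixes the sign via the ordinary Vandermonde. This is essentially the classical route (and is close in spirit to the treatment in Pinkus, which also goes through the variation-diminishing property of exponential sums), so there is no genuine methodological difference to report; the main gain of your write-up is that the paper no longer needs to outsource the proof.

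One small stylistic point: in the ``$f$ constant'' case you could shorten the discussion by simply noting that the functions $e^{a_t y}$ with distinct $a_t$ are linearly independent, so $f\equiv 0$ forces $c=0$ and any other constant value of $f$ has no zeros; this covers both subcases at once.
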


	\noindent We can now state and prove the following result.
	
	\begin{prop}
		Let $\G$ be a (possibly weighted) undirected  graph with adjacency matrix $A$. 	For any two non-cospectral vertices $i,j$, let $C_{h i}$ and $C_{h j}$ be defined as above. Consider the function 
		\[g(\beta) = [e^{\beta A}]_{ii} - [e^{\beta A}]_{jj} =\sum\limits_{h=1}^d  e^{\beta \mu_h} \, \left( C_{h i} - C_{h j} \right) \,.\]	
		The number of interlacing values for $i$ and $j$ (which is the number of zeros of $g(\beta)$) is less than or equal to the number of sign changes in the vector $C_{h i} - C_{h j}$, as $h$ varies from $1$ to $d$. 
	\end{prop}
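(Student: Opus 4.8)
The plan is to read the statement as a Descartes-type sharpening of Theorem~\ref{inter_points}. Write $c=(c_1,\dots,c_d)$ with $c_h:=C_{hi}-C_{hj}$, so that $g(\beta)=\sum_{h=1}^{d}c_h e^{\beta\mu_h}$ with $\mu_1<\cdots<\mu_d$ (real, since $A$ is symmetric), and let $S^{-}(c)$ denote the number of sign changes in $c$. If $g$ is a nonzero constant (the only constant possibility, since $i,j$ are not cospectral forces $c\neq 0$) it has no zeros and there is nothing to prove, so assume $g$ is non-constant. By Theorem~\ref{inter_points} the zero set of $g$ in $\R^{+}$ is finite, say $\beta_1<\cdots<\beta_M$, and $M$ is exactly the number of interlacing values of $i$ and $j$; the goal is therefore $M\le S^{-}(c)$.

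The main step is a variation-diminishing argument built on the interlacing sample points. Choose $\gamma_0,\dots,\gamma_M$ with $0<\gamma_0<\beta_1<\gamma_1<\beta_2<\cdots<\beta_M<\gamma_M$ and with no $\gamma_l$ a zero of $g$ (possible because $g$ has only finitely many zeros). Order the $2M+1$ numbers $\gamma_0<\beta_1<\gamma_1<\cdots<\beta_M<\gamma_M$ and let $B=[\,e^{t_p\mu_h}\,]$ be the $(2M+1)\times d$ matrix obtained by evaluating $e^{t\mu_h}$ at them; by Theorem~\ref{teo: matrix exp is TP} $B$ is totally positive (indeed strictly so: each generalized Vandermonde minor $\det[e^{t_p\mu_{h_q}}]$ with strictly increasing rows and columns is strictly positive). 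Then $y:=Bc$ is the vector of values of $g$ at those $2M+1$ points, namely $y=(g(\gamma_0),g(\beta_1),g(\gamma_1),\dots,g(\beta_M),g(\gamma_M))$; its $M$ entries in even positions all vanish, and the $M+1$ entries in odd positions are nonzero. Since each vanishing entry lies strictly between two nonzero entries, $y$ can be completed — by assigning to each zero the sign opposite to its left neighbour — to a sign sequence with at least $M$ sign changes. The variation-diminishing property of $B$ then yields $M\le S^{-}(c)$, as claimed; and since the same matrix $B$ (restricted to any $d$ rows) has full column rank, this also recovers the weaker bound $M\le d-1$ of Theorem~\ref{inter_points}.

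The one delicate point — and the place I expect genuine care is needed — is that a vanishing coordinate of $y$ must be \emph{allowed} to contribute a sign change: what is really used is the strong form $S^{+}(Bc)\le S^{-}(c)$ of the variation-diminishing inequality ($S^{+}$ counting sign changes after the most favourable assignment of signs to zeros), which holds because $B$ is strictly totally positive, whereas the plain form recorded in Theorem~\ref{teo: TP decreases sign changes} only bounds $S^{-}(y)$ and hence by itself controls only the number of sign-changing (odd-order) zeros of $g$. Should one wish to avoid invoking the strong form, the same bound follows by a Rolle-theorem induction in the spirit of Proposition~\ref{prop: max d solutions to sum exp = 0}: after discarding zero coefficients write $g(\beta)=e^{\mu_1\beta}\widetilde g(\beta)$ with $\widetilde g(\beta)=c_1+\sum_{h\ge 2}c_h e^{(\mu_h-\mu_1)\beta}$, observe that $\widetilde g'$ is again an exponential sum whose coefficient vector $(c_h(\mu_h-\mu_1))_{h\ge 2}$ has the same sign pattern as $(c_2,\dots,c_d)$, and induct on $d$; Rolle gives $Z(\widetilde g)\le Z(\widetilde g')+1$, and when $c_1$ and $c_2$ share a sign one gains an extra zero of $\widetilde g'$ in $(-\infty,\beta_1)$ — because $\widetilde g\to c_1$ while $\widetilde g-c_1$ and $\widetilde g'$ both carry the sign of $c_2$ near $-\infty$ — which upgrades the bound to $Z(\widetilde g)\le Z(\widetilde g')$, exactly compensating for the fact that in that case prepending $c_1$ does not create a new sign change. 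Either way, tracking the sign pattern at the left end is the only subtle bookkeeping.
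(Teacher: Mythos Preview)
Your argument is correct and follows the same route as the paper's: sample $g$ at finitely many ordered points, form the (strictly) totally positive matrix $B=[e^{\beta_l\mu_h}]$, and invoke the variation-diminishing property to bound the number of zeros by $S^{-}(c)$. You have in fact been more careful than the paper on exactly the point you flag: the paper samples only at non-zero values between the roots and appeals to the weak inequality $S^{-}(y)\le S^{-}(x)$, which strictly speaking bounds only the sign-changing zeros of $g$; your device of including the zeros themselves among the sample points and using the strict-TP inequality $S^{+}(Bc)\le S^{-}(c)$ (or, alternatively, the self-contained Rolle induction you sketch, which is the classical proof of Descartes' rule for exponential sums on all of $\R$) closes that gap.
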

	\begin{proof}
		Let $\beta_1, \ldots, \beta_M$ be any $M$ distinct positive real numbers, numbered in increasing order.
		By Theorem \ref{teo: matrix exp is TP}, the matrix $B$ with entries $B_{l h} = e^{\beta_l \mu_h}$ is totally positive. Let $x\in \R^d$ be a vector with components $x_h =  C_{h i} - C_{h j}$.  The product $y = Bx$ has components $y_l = g(\beta_l)$. Denote by $s$ the number of sign changes in $y$. By Theorem \ref{teo: TP decreases sign changes}, $s$ is less than or equal to the	number of sign changes in $x$, which are at most $d-1$. Since the $\beta_l$'s and $M$ are arbitrary, and the zeros of $g(\beta)$ form a discrete set, it follows that the function $g(\beta)$ has at most $s$ zeros. Therefore there can be at most $s$ interlacing values for $i$ and $j$. 
	\end{proof}
	\begin{rem} 
		Since the number of sign changes is obviously less than  $d-1$,  we recover Theorem \ref{inter_points}. However, if we know that $C_{h i} > C_{h j}$ for some values of $h$, then we can obtain a better bound for the number of interlacing values.
	\end{rem}
	
	In practice, we found that  the bound $d-1$ dramatically overestimates the typical number of interlacing values. 
	Computational experiments show that for many simple undirected graphs with between 10 and 50 vertices, the number of interlacing values
	for the subgraph centrality is at most 2 (see Fig.1 for the smallest example we could find of a graph with two such values).
	The adjacency matrices of such graphs are 0-1 matrices, and this imposes a number
	of constraints on the eigenvalues and eigenvectors. Hence, such matrices are highly non-generic, and thus it is not surprising that 
	the generic bound $d-1$ is a wild overestimate on the number of interlacing points.  Obtaining sharper bounds for specific classes of
	graphs represents an open problem. We return on this topic in section \ref{sec3:open}.
	
	In the computational experiments we performed, for each graph a suitable interval has been chosen, and the subgraph centrality was calculated on a discretization of that interval. Interlacing values are found as zeros of the function $g(\beta)$, which can be deduced from sign changes in the function calculated on the discretization mesh. 
	
	This, of course, does not constitute a formal proof that the graphs in Fig.1 and Fig.2 are the minimal ones for which interlacing occurs. They are the smallest ones we 
	have managed to find through a brute-force search over all small graphs ($n \leq 10$). Proving the minimality of these examples remains an open question.
	
	On the other hand, cospectrality is a condition which can be easily verified or disproved computationally. Indeed, by the Cayley-Hamilton theorem, it is sufficient to calculate $A^r$ for $0 \leq r \leq n-1$, which can be done in exact arithmetic since $A$ has $0$-$1$ entries.

	\begin{figure}[!t] \label{FIG1}
		\begin{subfigure}{0.39\textwidth}
			\begin{tikzpicture}[x=22, y=22]
			\tikzset{palla/.style={circle, draw=black, minimum size = 0.5cm,outer sep =0, inner sep=0}}
			\tikzmath{\l=2.5;};
			
			\node[palla] (1) at (0,0) {1};
			\node[palla] (8) at (\l,0) {8}; 
			\node[palla] (7) at (\l,\l) {7};
			\node[palla] (5) at (0,\l) {5}; 
			\node[palla] (6) at (0.5*\l,1.4*\l) {6};
			
			\node[palla] (2) at (190:2.1) {2}; 
			\node[palla] (3) at (260:2.1) {3};
			\node[palla] (4) at (225:2.6) {4}; 
			
			\node[palla] (9) at (1.4*\l,-0.7*\l) {9};

			\draw (4) -- (1) -- (2) -- (4) -- (3) -- (1) -- (5) -- (7) -- (8) -- (1);
			\draw (5) -- (6) -- (7); \draw (8) -- (9);
			
			\end{tikzpicture}
		\end{subfigure}
		\begin{subfigure}{0.59\textwidth}
			\includegraphics[width=\textwidth]{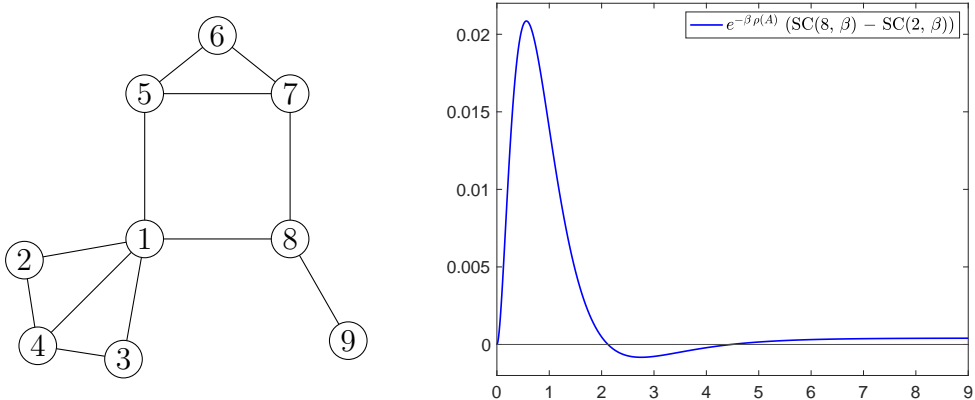}
			
		\end{subfigure}

		\caption{Graph with $n=9$ nodes. It is the smallest graph we have found with at least two interlacing points: for vertices 2 and 8, the interlacing values are approximately $\beta \approx 2.12,4.48$. }
	\end{figure}

	\subsection{Resolvent Centrality}\label{sec2.4:resolvent}
	Let  $\rho$ be the spectral radius of $A$ and let $\alpha \in (0, \frac{1}{\rho})$. The \textit{resolvent centrality} \cite{EH10} (also called \textit{resolvent subgraph centrality}) with parameter $\alpha$ is defined as follows:
	\[RC(i,\alpha) = [(I-\alpha A )^{-1}]_{ii}.\]
	This centrality measure is related to, but distinct from, Katz centrality (see, e.g.,  \cite{NewmanBOOK}). 
	
	The idea, analogously to  (exponential) subgraph centrality, is to use as centrality scores the diagonal entries $f(A)_{ii}$, where $f(A)$ is a matrix function. The resolvent centrality corresponds to $f(x) = \frac{1}{1-\alpha x}$ in place of $f(x) = e^{\beta x}$. The inverse of a matrix can be expressed using Cramer's rule: $[B^{-1}]_{ij} = (-1)^{i+j}\,\frac{\det(B_{[j,i]} )}{\det B}$, where  $B_{[j,i]}$ denotes the matrix obtained removing row $j$ and column $i$ from $B$. Therefore:
	\begin{equation*}
	RC(i,\alpha) = [(I-\alpha A )^{-1}]_{ii} = \frac{\det(I -\alpha A_{[i,i]} ) }{ \det(I -\alpha A)}.
	\label{eq: RC rapporto Cramer polinomio}
	\end{equation*}
	Note that for $0 < \alpha < \frac{1}{\rho}$ the resolvent centrality is well defined (and positive), both by Taylor series expansion $(I-\alpha A)^{-1} = I + \alpha A + \alpha^2 A^2 + \cdots$, which converges because $\rho(\alpha A ) < 1$, and by Cramer's rule since for $\alpha < \frac{1}{\rho}$, the denominator $\det(I -\alpha A)$ is not zero.
	
	We present two upper bounds  for the number of  interlacing values of resolvent centrality. The first one is valid for any matrix $A$, while the second is sharper but restricted only to simple undirected graphs.
	
	\begin{teo}
		Let $\G$ be a (possibly weighted) directed or undirected graph with $n$ vertices and with adjacency matrix $A$. For any two non-cospectral vertices $i,j$, there can be at most $n-1$ interlacing values for resolvent centrality, i.e. at most $n-1$ solutions to $RC(i,\alpha)=RC(j,\alpha)$.
	\end{teo}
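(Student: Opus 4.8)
The plan is to turn the equation $RC(i,\alpha) = RC(j,\alpha)$ into a polynomial identity and then count roots. First I would use the Cramer's rule expression recalled above:
\[ RC(i,\alpha) = \frac{\det(I - \alpha A_{[i,i]})}{\det(I - \alpha A)}, \qquad RC(j,\alpha) = \frac{\det(I - \alpha A_{[j,j]})}{\det(I - \alpha A)}. \]
For $\alpha \in (0, \frac{1}{\rho})$ the common denominator $\det(I - \alpha A) = \prod_k (1 - \alpha \lambda_k)$ does not vanish, since $|\alpha \lambda_k| \le \alpha \rho < 1$ for every eigenvalue $\lambda_k$ of $A$ (this also covers the directed case, where the $\lambda_k$ may be complex). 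Hence on that interval $RC(i,\alpha) = RC(j,\alpha)$ if and only if
\[ p(\alpha) := \det(I - \alpha A_{[i,i]}) - \det(I - \alpha A_{[j,j]}) = 0, \]
and since $A_{[i,i]}$ and $A_{[j,j]}$ are $(n-1)\times(n-1)$ matrices, $p$ is a polynomial in $\alpha$ of degree at most $n-1$.

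The one step that actually requires an argument is showing that $p$ is not identically zero; this is where non-cospectrality enters. If $p \equiv 0$, then $RC(i,\alpha) = RC(j,\alpha)$ for all $\alpha \in (0, \frac{1}{\rho})$. But the Neumann series $(I - \alpha A)^{-1} = \sum_{r \ge 0} \alpha^r A^r$ converges on this interval (as $\rho(\alpha A) = \alpha \rho < 1$), so $RC(i,\alpha) = \sum_{r \ge 0} \alpha^r [A^r]_{ii}$ and likewise for $j$; matching power-series coefficients would give $[A^r]_{ii} = [A^r]_{jj}$ for every $r \ge 0$, i.e.\ $i$ and $j$ cospectral, contrary to hypothesis. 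Hence $p \not\equiv 0$.

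To conclude, a nonzero polynomial of degree at most $n-1$ has at most $n-1$ roots, and every solution of $RC(i,\alpha) = RC(j,\alpha)$ in $(0,\frac{1}{\rho})$ is such a root; so there are at most $n-1$ interlacing values for the resolvent centrality. There is no real obstacle here — the proof is short — but it is worth noting that the same reasoning yields a little more: $p(0) = \det(I) - \det(I) = 0$, so $p(\alpha) = \alpha\, q(\alpha)$ with $\deg q \le n-2$, whence in fact at most $n-2$ positive interlacing values. I would nonetheless keep the clean bound $n-1$ in the statement, since the genuinely sharper estimate for simple undirected graphs is treated separately.
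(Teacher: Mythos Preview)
Your proof is correct and follows essentially the same route as the paper's: reduce $RC(i,\alpha)=RC(j,\alpha)$ via Cramer's rule to the equality of two degree-$(n-1)$ polynomials in $\alpha$, and use the Neumann series to show that equality of these polynomials forces cospectrality. Your additional remark that $p(0)=0$, hence at most $n-2$ \emph{positive} interlacing values, is a valid sharpening that the paper does not make.
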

	\begin{proof}
		Suppose that there are $n$ or more interlacing values, at points $\alpha_1, \ldots, \alpha_n$. Together with equation (\ref{eq: RC rapporto Cramer polinomio}), this implies that
		\[\det(I -\alpha_k A_{[i,i]}) = \det(I -\alpha_k A_{[j,j]}) \; \; \; \text{ for } 1 \leq k \leq n. \]
		
		These determinants are polynomials of degree $n-1$ in $\alpha$, so if they coincide on $n$ values of $\alpha$, they must be the same polynomial. This implies that $RC(i,\alpha) = RC(j,\alpha)$ for every $\alpha$.
		
		As in the case with subgraph centrality, $RC(i,\alpha)$ and $RC(j,\alpha)$ are analytic functions, with $r$th coefficient $[A^r]_{ii}$ and $[A^r]_{jj}$ respectively. If the two functions coincide, they must have the same coefficients. This is equivalent to $i$ and $j$ being cospectral.
	\end{proof}
	
	\begin{teo}
		Let $\G$ be a simple undirected graph with adjacency matrix $A$, and denote by $d$ the number of distinct eigenvalues of  $A$.  
		For any two non-cospectral vertices $i,j$ there can be at most $d-1$ interlacing values for the resolvent  centrality. 
	\end{teo}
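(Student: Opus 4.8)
The plan is to follow the proof of Theorem \ref{inter_points} almost verbatim, replacing the exponential $e^{\beta x}$ by the rational function $1/(1-\alpha x)$ and substituting the Lindemann--Weierstrass-flavoured estimate of Proposition \ref{prop: max d solutions to sum exp = 0} with an elementary count of the roots of a low-degree polynomial. The point of restricting to simple undirected graphs is exactly that $A$ is then symmetric, so it has a spectral decomposition with orthogonal $Q$ and the diagonal of any matrix function of $A$ is governed by the $d$ distinct eigenvalues rather than by all $n$ of them.

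Concretely, I would first expand the resolvent centrality through the spectral decomposition of $A$ as in Section \ref{sec2.1:prel}: for $\alpha\in(0,1/\rho)$,
\[
RC(i,\alpha)=[(I-\alpha A)^{-1}]_{ii}=\sum_{h=1}^d\frac{C_{h i}}{1-\alpha\mu_h},
\qquad
g(\alpha):=RC(i,\alpha)-RC(j,\alpha)=\sum_{h=1}^d\frac{C_{h i}-C_{h j}}{1-\alpha\mu_h}.
\]
Next I would clear denominators. Set $q(\alpha):=\prod_{h=1}^d(1-\alpha\mu_h)$; since $1/\mu_h\notin(0,1/\rho)$ for every eigenvalue (if $\mu_h>0$ then $1/\mu_h\ge 1/\rho$, and if $\mu_h\le 0$ then $1-\alpha\mu_h\ge 1>0$ on the interval), $q(\alpha)\neq 0$ throughout the domain of $RC$. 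Hence on $(0,1/\rho)$ the zeros of $g$ coincide with the zeros of the polynomial
\[
N(\alpha):=q(\alpha)\,g(\alpha)=\sum_{h=1}^d(C_{h i}-C_{h j})\prod_{k\neq h}(1-\alpha\mu_k),
\]
which has degree at most $d-1$. So it suffices to prove that $N$ is not the zero polynomial, since a nonzero real polynomial of degree $\le d-1$ has at most $d-1$ real roots.

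The key step is showing $N\not\equiv 0$, and this is where the non-cospectrality hypothesis enters. I would argue that the $d$ polynomials $L_h(\alpha):=\prod_{k\neq h}(1-\alpha\mu_k)$, $h=1,\dots,d$, are linearly independent in the $d$-dimensional space of polynomials of degree $\le d-1$: for each $h$ with $\mu_h\neq 0$ one has $L_{h'}(1/\mu_h)=0$ for $h'\neq h$ (the factor indexed by $k=h$ vanishes) while $L_h(1/\mu_h)=\prod_{k\neq h}(1-\mu_k/\mu_h)\neq 0$ by distinctness of the $\mu_k$, so evaluating a vanishing linear combination at the points $1/\mu_h$ forces all the corresponding coefficients to be $0$; the at most one index with $\mu_h=0$ is then handled by observing that the leftover term $(C_{h i}-C_{h j})L_h$ cannot vanish identically unless $C_{h i}=C_{h j}$, since $L_h\not\equiv 0$. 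Consequently $N\equiv 0$ would force $C_{h i}=C_{h j}$ for all $h$, whence $[A^r]_{ii}=\sum_h\mu_h^r C_{h i}=\sum_h\mu_h^r C_{h j}=[A^r]_{jj}$ for every $r\ge 0$, i.e.\ $i$ and $j$ would be cospectral — a contradiction. Therefore $N$ is a nonzero polynomial of degree $\le d-1$, so $g$ has at most $d-1$ zeros on $(0,1/\rho)$, which is the claimed bound on the number of interlacing values.

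The only real obstacle is the mild bookkeeping around a possible zero eigenvalue in the non-vanishing argument for $N$; the rest is routine. As an alternative that would also give the sharper sign-change refinement (paralleling the Proposition following Theorem \ref{inter_points}), one could instead note that for increasing $\alpha_1<\dots<\alpha_M$ in $(0,1/\rho)$ and increasing $\mu_1<\dots<\mu_d$ the Cauchy-type matrix $[\,1/(1-\alpha_l\mu_h)\,]$ is totally positive up to a fixed positive diagonal scaling, and then apply Theorem \ref{teo: TP decreases sign changes} to bound the number of sign changes of $(g(\alpha_l))_l$ by the number of sign changes of $(C_{h i}-C_{h j})_h\le d-1$, concluding as in that Proposition.
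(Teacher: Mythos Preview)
Your argument is correct, but it follows a genuinely different route from the paper's. The paper does not use the spectral decomposition of $(I-\alpha A)^{-1}$; instead it writes $RC(i,\alpha)=\det(I-\alpha A_{[i,i]})/\det(I-\alpha A)$ via Cramer's rule, and then invokes Cauchy's Interlacing Theorem to show that each eigenvalue $\mu_h$ of multiplicity $m_h$ in $A$ has multiplicity at least $m_h-1$ in $A_{[i,i]}$, so that $\det(I-\alpha A_{[i,i]})$ and $\det(I-\alpha A_{[j,j]})$ share a common factor of degree $n-d$; cancelling it leaves two degree-$(d-1)$ polynomials that would have to coincide if there were $d$ interlacing values, forcing cospectrality. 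Your approach bypasses Cauchy interlacing entirely: you clear the $d$ distinct denominators directly from the partial-fraction expansion and reduce to the linear independence of the Lagrange-type polynomials $L_h$. This is arguably more elementary and more parallel to the exponential case (Theorem~\ref{teo: SC max d interlacements}), and as you note it extends immediately to the sign-change refinement via total positivity of a Cauchy-type kernel. The paper's route, on the other hand, ties the bound to the characteristic polynomials of the vertex-deleted subgraphs $A_{[i,i]}$, which is a more ``graph-theoretic'' object and makes the role of eigenvalue multiplicities explicit.
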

	\begin{proof}
		As above, we have $RC(i,\alpha) = RC(j,\alpha)$ if and only if $\det(I -\alpha A_{[i,i]}) = \det(I -\alpha A_{[j,j]} )$. 
		Cauchy's Interlacing Theorem (see \cite[Thm.~4.3.17]{HJMA}) states  that if $\lambda_1 \leq \lambda_2 \leq \ldots \leq \lambda_n$ are the eigenvalues of 
		the $n\times n$ symmetric matrix $A$, and $\nu_1 \leq \ldots \leq \nu_{n-1}$ are the eigenvalues of the submatrix $A_{[i,i]}$, then
		\[\lambda_1 \leq \nu_1 \leq \lambda_2 \leq \nu_2 \leq \ldots  \le \nu_{n-1} \leq \lambda_n. \]
		
		If an eigenvalue has multiplicity $t$, e.g. $\lambda_k = \lambda_{k+1} = \ldots  = \lambda_{k+t-1}$, then $\nu_k = \ldots = \nu_{k+t-2}$, so $\nu_k$ has multiplicity 
		at least $t-1$. 
		
		Denote by $\mu_1, \ldots , \mu_d$ the eigenvalues of $A$ without repetition, and let $m_h$ be the multiplicity of $\mu_h$. Then by Cauchy's Interlacing Theorem, $\mu_h$ is an eigenvalue of $A_{[i,i]}$ with multiplicity at least $m_h-1$. So $A_{[j,j]}$ and $A_{[i,i]}$ have at least $\sum_h m_h-1  = n-d$ common eigenvalues, and their characteristic polynomials have a common factor $q(\alpha)$ of degree $n-d$. 
		
		Suppose that $i,j$ have $d$ interlacing values $\alpha_1, \ldots, \alpha_d$ in $(0, \frac{1}{\rho})$. As noted above, the resolvent subgraph centrality is strictly positive for these values of $\alpha$, so the polynomials $\frac{\det(I -\alpha A_{[i,i]})}{q(\alpha)} $ and $\frac{\det(I -\alpha A_{[j,j]})}{q(\alpha)}$ have degree $d-1$ and coincide on $d$ values. This implies that $\det(I -\alpha A_{[i,i]})$ and $\det(I -\alpha A_{[j,j]})$ are the same polynomial, and therefore $i$ and $j$ are necessarily cospectral.
		
	\end{proof}
	
	\begin{figure}[!t]
		\centering
		\begin{subfigure}{0.99\textwidth}
			\centering
			\begin{tikzpicture}[x=0.76cm, y=0.76cm]
			\tikzset{palla/.style={circle, draw=black, minimum size = 0.7cm,outer sep =0, inner sep=0}}
			\tikzmath{\l=2;};
			
			\node[palla] (1) at (-0.5,0.5) {1};
			\node[palla] (2) at (0.5,-0.5) {2}; 
			\node[palla] (3) at (-3,-3) {3};
			\node[palla] (4) at (3,-3) {4}; 
			\node[palla] (5) at (1.4,1.4) {5};
			\node[palla] (6) at (3,3) {6}; 
			\node[palla] (7) at (-3,0) {7};
			\node[palla] (8) at (0,-3) {8}; 
			\node[palla] (9) at (3,0) {9};
			\node[palla] (10) at (0,3) {10}; 
			
			\draw (1) -- (7) -- (2) -- (8) -- (1) -- (9) -- (2) -- (10) -- (1);
			\draw (9) -- (4) -- (8) -- (3) -- (7) -- (10) -- (5) -- (9) -- (6) -- (10);
			
			\end{tikzpicture}	
		\end{subfigure}
		
		\begin{subfigure}{0.49\textwidth}
			\includegraphics[width=1\textwidth]{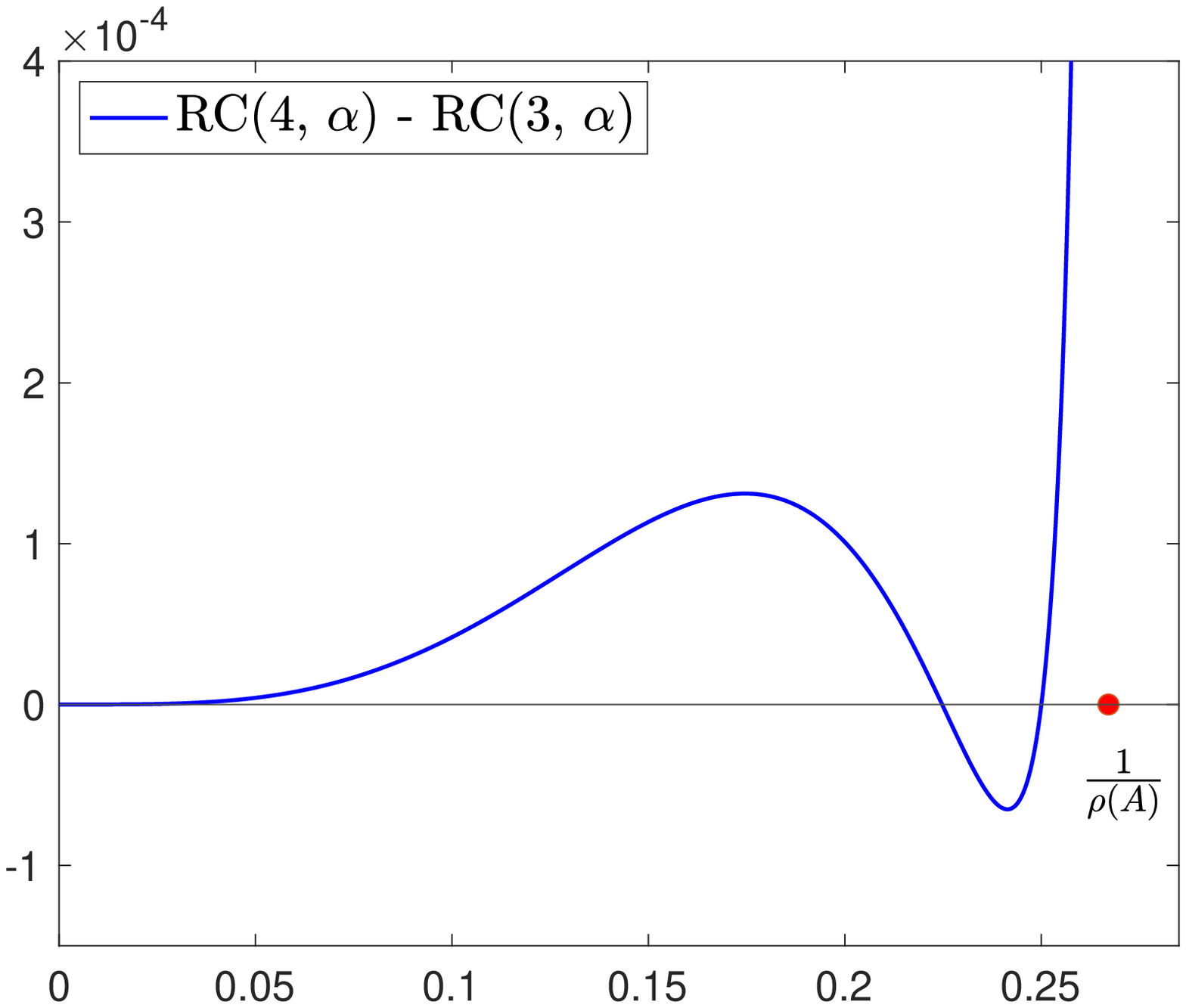}
		\end{subfigure}
		\begin{subfigure}{0.49\textwidth}
			\includegraphics[width=1\textwidth]{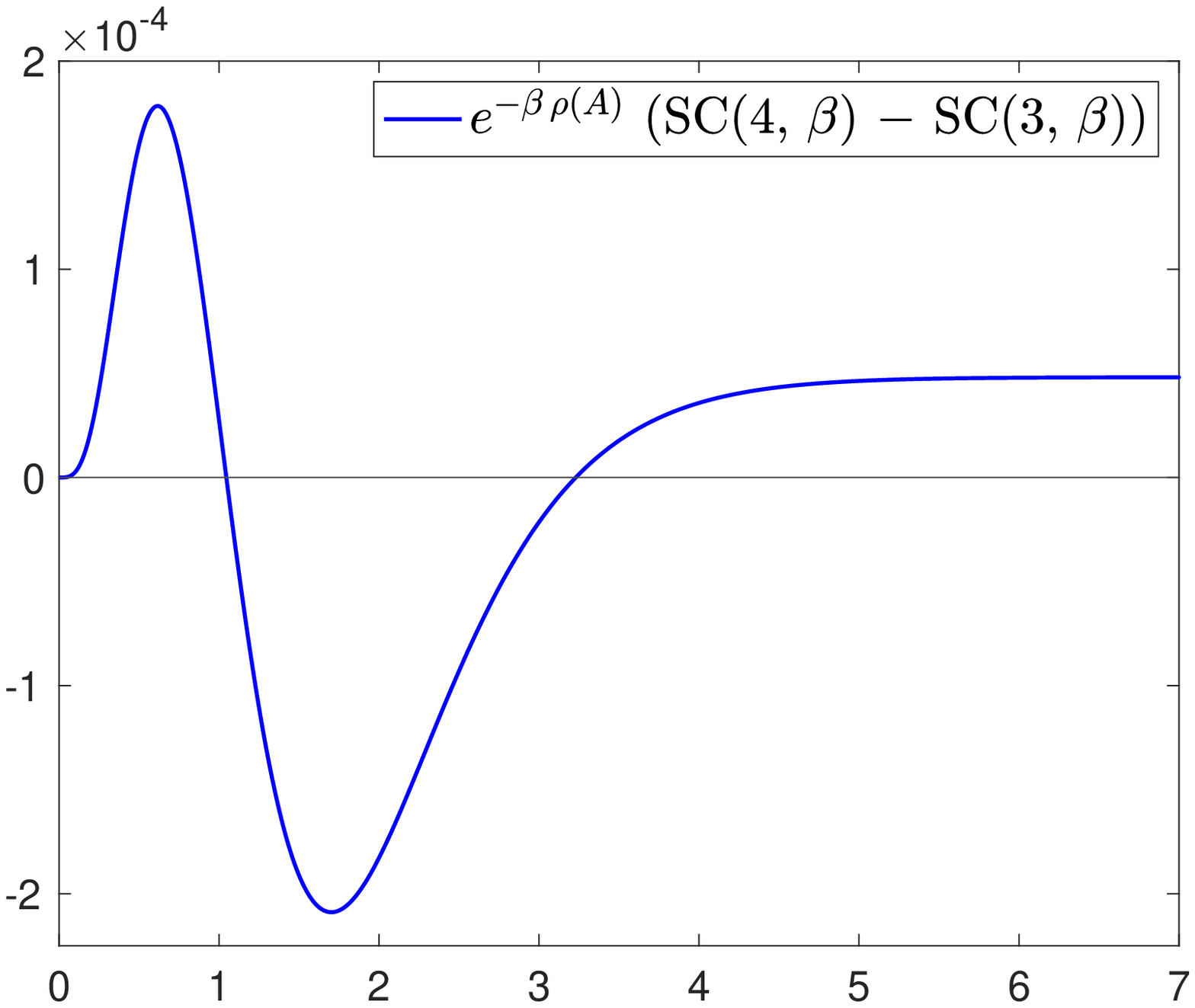}
		\end{subfigure}
		
		\caption{Graph with $n=10$ nodes. It is the smallest graph we have found with at least two interlacing values for resolvent  centrality. Vertices 3 and 4 interlace. On the left 
			we plot the difference $RC(4, \alpha) - RC(3,\alpha)$. On the right we plot the rescaled difference $e^{-\beta \rho} \,(SC(4, \beta) - SC(3,\beta) )$, showing that in
			this example the subgraph centralities of the two nodes also interlace twice.}
	\end{figure}

	In Fig.2 we show an example of interlacing for resolvent centrality and the corresponding behavior of the subgraph centrality.
	
	\vspace{0.3cm}
	
	In this paper we have focused on exponential- and resolvent-based subgraph centrality. The use of the matrix exponential and resolvent to define
	centrality measures is well established, both because of their demonstrated effectiveness in practice and because of the fundamental role that these two matrix functions
	play in spectral theory and more generally in mathematical physics. We refer to \cite{EBH12} for a thorough discussion of how physical
	analogies  naturally lead to the use of these two matrix functions in applications to Network Science.
	
	The reader may wonder what happens if another matrix function $f(A)$ is chosen, instead of the exponential or the resolvent. This is known as \textit{functional centrality} \cite{REGfuncentr}, and can be defined under reasonable assumptions on $f$. Typically, one asks for $f$ to be real analytic in a neighbourhood of $0$ (to work with nice and regular enough functions) and for all its MacLaurin series' coefficients to be positive (to preserve the combinatorial interpretation in terms of walks on the graph), see
	\cite{BKcentralitymeasures}. Such a function is called \textit{admissible}, and by denoting $R$ its radius of convergence in $0$, the matrix function $f(\beta A)$ is defined for any $\beta \in (0, \frac{R}{\rho(A)})$, where $\rho(A)$ is the spectral radius of $A$. If $f$ is entire, then $\beta$ can take any value in $(0,\infty)$. 
	In \cite{BKcentralitymeasures} the limiting behaviour of functional centrality was analyzed, for the two cases $\beta \to 0^+$ and $\beta \to \left(\frac{R}{\rho(A)}\right)^-$
	(or $\beta \to \infty$ in the entire case), showing that in these limits the behaviour of these centrality measures is identical to that of (exponential) subgraph and resolvent centrality.
	Similarly to these centralities, interlacing may occur also for functional centrality. 
	It is possible to adapt the proof of Theorem \ref{teo: no infinite interlac} also for this case, concluding that under the same assumptions the number of interlacing points cannot be infinite. However, without more information on the function $f$, it is difficult to prove a quantitative bound on the number of interlacing values as we have done with subgraph and resolvent centralities.

	\section{Open questions}\label{sec3:open}
	
	In this last section we propose some open problems which would be of interest to investigate further.
	
	Theorem \ref{teo: BD lindemann weierstrass}, as also noted in \cite{BDvertexdistinction}, holds for any diagonalizable matrix $A$ whose entries are all algebraic numbers (and thus for weighted undirected graphs with algebraic weights). Indeed, for undirected graphs the adjacency matrix is symmetric, and therefore diagonalizable. The algebraicity condition is necessary: we need it to apply  the Lindemann-Weierstrass Theorem, and there exists interlacing values for transcendental $\beta$. However, the diagonalizability condition is not strictly necessary and it was shown 
	in \cite{BDvertexdistinction} that  it can be somewhat relaxed by imposing some constraints on the Jordan structure of $A$. At present we do not know if the 
	diagonalizablity condition can be completely removed or not (without restrictions on the Jordan form). 
	This leads to formulating the following problem.
	
	\begin{prob}
		Let $A$ be a (non diagonalizable)  $n\times n$ matrix, whose entries are all nonnegative algebraic numbers, regarded as the adjacency matrix of a weighted directed graph $\G$. Let $\beta\neq 0$ be an algebraic number. Prove or disprove the following statement: if for two vertices $i$, $j$ we have $[e^{\beta A}]_{ii} = [e^{\beta A}]_{jj}$, then $i$ and $j$ are cospectral.
	\end{prob}
	
	Next, we turn to the problem of estimating the number of interlacing values.
	We checked many graphs computationally, and the number of interlacing values was always much smaller (other than for the smallest graphs) than the bounds we found. 
	For the majority of pairs of vertices the subgraph centralities do not interlace at all, or at most once (it is common to find vertices for which degree and eigenvector centrality give different rankings). The smallest graph we could find (through a computer search) with two interlacing values  of exponential subgraph centrality has 9 vertices, shown in Fig.1. When $n$ is of order $20$ or more, it's not too uncommon to find pairs of vertices with two interlacing values; however, we found only a couple of graphs with 3 interlacing values, and none with 4 or more
	(see Fig.3). Hence, we propose the following problem.
	
	\begin{prob} 
		Given $k \in \N$, characterize the smallest graph(s) with a pair of vertices that have at least $k$ interlacing values of the subgraph centrality.
		Conversely, given a graph with $n$ vertices, what is the largest number of possible interlacing values for a pair of its vertices? In particular, are there
		classes of graphs for which the bound $d-1$ is attained?
	\end{prob}

	\begin{figure}[!t]
		\centering
		\includegraphics[width=0.99\textwidth]{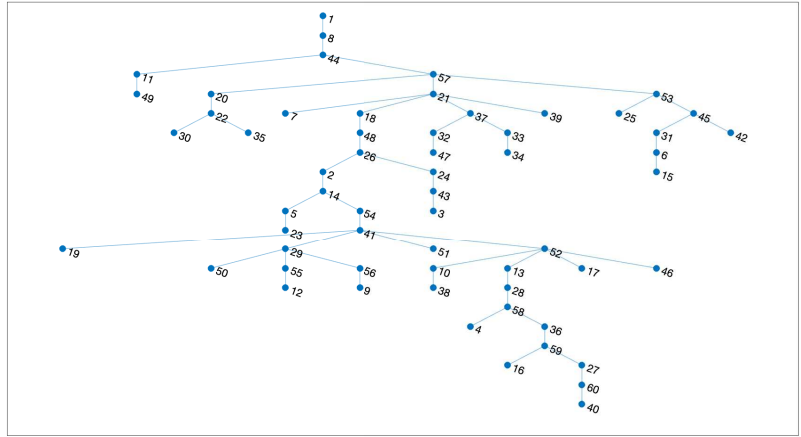}
		\includegraphics[width=0.99\textwidth]{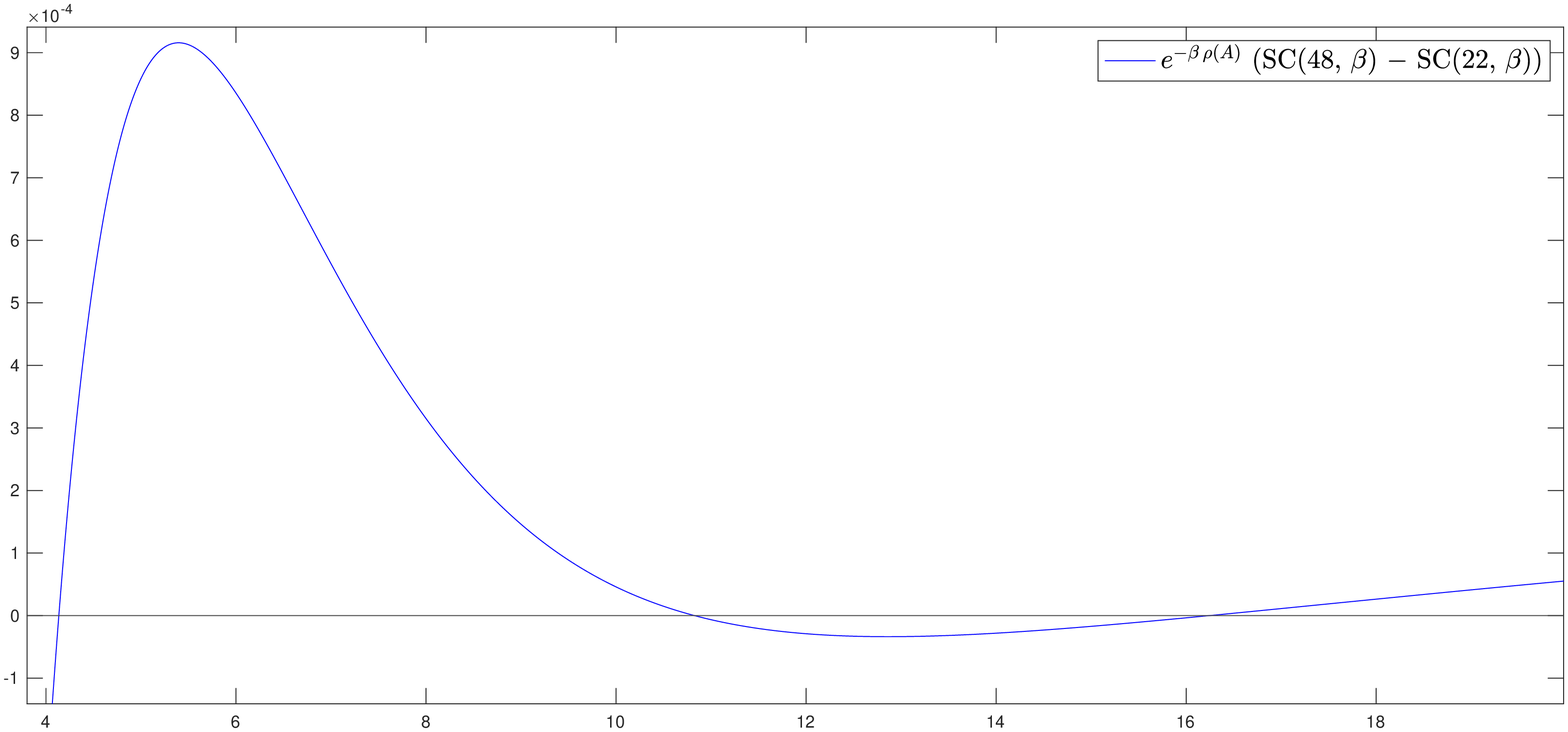}
		\caption{Graph with $n=60$ vertices 
			and three interlacing values. Vertices 22 and 48 interlace three times, with interlacing values $\beta \approx 4.13, 10.9, 16.3$. Unlike the graphs in figures 1 and 2, this example was not found by complete search of all graphs up to 60 vertices, so it is quite possible that there are smaller graphs with three interlacing values.} 
	\end{figure}

	A related, more informal problem is the following: is there a simple way to construct graphs with many interlacing values?
	\vspace{0.3cm}
	
	Theorem \ref{teo: SC max d interlacements} can be extended to a broader category of graphs. Indeed, it requires Proposition \ref{prop: max d solutions to sum exp = 0}, so it is sufficient to have diagonalizable $A$ with real eigenvalues. This requirement is necessary, because if $\lambda_i$ could be any complex numbers, the equation could have infinitely many solutions. 
	To see this, condider the following example:  
	\[ A= \begin{pmatrix}
	0 & -1 &0\\
	1 & 0 &0 \\
	0 & 0 & 0
	\end{pmatrix}, \quad
	e^{\beta A} = \begin{pmatrix}
	\cos(\beta) & -\sin(\beta)  & 0\\
	\sin(\beta) & \cos(\beta)   & 0\\
	0 & 0 & 1
	\end{pmatrix}.\]
	Note that $A$ has eigenvalues $0, i, -i$. 
	Since $g(\beta) = [e^{\beta A}]_{11} - [e^{\beta A}]_{33} = \cos(\beta) -1$ has infinitely many solutions,  there are infinitely many interlacing values for the
	$\beta$-sugraph centrality of vertices 1 and 3. Hence, in general the statement is false for signed graphs. However, it could still be true for some other category of graphs. 
	In particular, we ask the following question.
	
	\begin{prob} 
		Does Theorem \ref{teo: SC max d interlacements} hold in the case of directed graphs?   
		(Here we allow the graph to be weighted with positive weights.)
	\end{prob}

	While we have formulated these questions for the subgraph centrality,  they are  also of interest for other centrality measures. We have seen some results for resolvent centrality, but the bound on the number of interlacing $\alpha$ values obtained is not optimal. Also, it would be interesting to extend the result to other centrality measures which are dependent on a parameter. For example, Katz centrality \cite{EstradaBOOK,NewmanBOOK} and total communicability \cite{BKtotcomm} are related to resolvent centrality and to subgraph centrality, respectively, so one would expect that similar results may hold. In Fig.4 we give an example of  a graph with two  cospectral nodes having different Katz centrality and different
	total communicability, showing that these measures can behave quite differently from resolvent and subgraph centrality.
	
	\begin{figure}[!t]
		\centering
		\begin{tikzpicture}[x=0.76cm, y=0.76cm]
		\tikzset{palla/.style={circle, draw=black, minimum size = 0.7cm,outer sep =0, inner sep=0}}
		\tikzmath{\l=2;};
		
		\node[palla] (1) at (2,1.5) {1};
		\node[palla] (2) at (2,-1.5) {2}; 
		\node[palla] (3) at (-5,-1.5) {3};
		\node[palla] (4) at (-5,1.5) {4}; 
		\node[palla] (5) at (5,-1.5) {5};
		\node[palla] (6) at (5,1.5) {6}; 
		\node[palla] (7) at (-3,0) {7};
		\node[palla] (8) at (0,0) {8};

		\draw (4) -- (7) -- (8) -- (1) -- (6) -- (2) -- (8);
		\draw (3) -- (7);
		\draw (1) -- (5) -- (2);
		\end{tikzpicture}	
		\caption{Vertices 1 and 8 are cospectral. Along with other three graphs with 8 vertices, it is the smallest graph we have found with a pair of cospectral vertices with different Katz centrality and total communicability.}
	\end{figure}
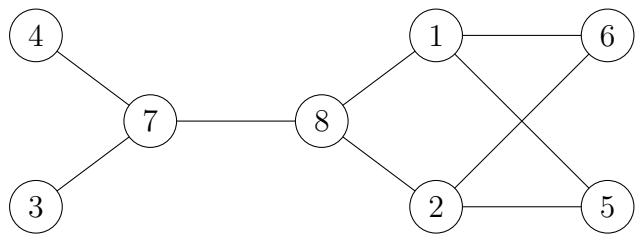

	From computer experiments, we have noticed that when two vertices exhibit interlacing with respect to resolvent centrality, they also interlace with (exponential)
	subgraph centrality. Hence, we put forth the following problem.
	
	\begin{prob}
		For a given pair of vertices of a graph $\G$, prove or disprove the following statement:
		the number of interlacing values relative to resolvent centrality is always less than or equal to the number of interlacing values relative to subgraph centrality.
	\end{prob}
	
	\vspace{0.3cm}
	\noindent {\bf Acknowledgement}: We would like to thank Sinan Aksoy and Stephen Young for inviting us to write this article as well as two anonymous referees for their comments and suggestions.
	

\end{document}